\numberwithin{equation}{section}
\newcommand{\SetFigFont}[3]{}
\title[Some curvature problems in semi-Riemannian geometry]{Some curvature problems in semi-Riemannian geometry}
\author[F.\ Finster]{Felix Finster}
\thanks{Supported by the Deutsche Forschungsgemeinschaft within the
Priority Program ``Globale Differentialgeometrie.''}
\address{NWF I - Mathematik \\ Universit\"at Regensburg \\ D-93040 Regensburg \\ Germany}
\email{Felix.Finster@mathematik.uni-r.de \\
Marc.Nardmann@mathematik.uni-r.de}
\author[M.\ Nardmann]{Marc Nardmann}
\theoremstyle{definition} 
\newtheorem{Def}{Definition}[section]
\theoremstyle{plain}      
\newtheorem{Thm}[Def]{Theorem}
\newtheorem{Prp}[Def]{Proposition}
\newcommand{\Thanks}{\vspace*{.5em} \noindent \thanks}
\newcommand{\beq}{\begin{equation}}
\newcommand{\eeq}{\end{equation}}
\newcommand{\Proof}{\begin{proof}}
\newcommand{\QED}{\end{proof} \noindent}
\newcommand{\R}{\mathbb{R}}
\newcommand{\N}{\mathbb{N}}
\newcommand{\D}{\mathcal{D}}
\renewcommand{\O}{{\mathscr{O}}}
\newcommand{\spec}{\mbox{\rm spec}}
\newcommand{\OO}{\text{\rm O}}
\newcommand{\Gr}{\text{\rm Gr}}
\newcommand{\set}[1]{\{#1\}}
\newcommand{\without}{\setminus}
\newcommand{\abs}[1]{\lvert#1\rvert}
\newcommand{\Tw}{\textsl{Tw}}
\newcommand{\Sw}{\textsl{Sw}}
\newcommand{\scal}{s}
\newcommand{\Ric}{\text{\rm Ric}}
\newcommand{\suchthat}{\mathrel{|}}
\newcommand{\Hess}{\text{\rm Hess}}
\newcommand{\divergence}{\text{\rm div}}
\newcommand{\laplace}{\Delta}
\newcommand{\Twist}[2]{\textsl{Tw}^{#1}_{#2}}
\newcommand{\Symst}[2]{\textsl{Sw}^{#1}_{#2}}
\newcommand{\mfbd}{\partial}
\newcommand{\pr}{\text{\rm pr}}
\newcommand{\compose}{\circ}
\newcommand{\M}{\mathcal{M}}
\definecolor{green}{rgb}{0.,0.,0.}
\definecolor{red}{rgb}{0.,0.,0.}
\definecolor{blue}{rgb}{0.,0.,0.}
\definecolor{grey}{rgb}{0.7,0.7,0.7}
\begin{document}

\begin{abstract}
In this survey article we review several results on the curvature of semi-Riemannian metrics which are motivated by the positive mass theorem. The main themes are estimates of the Riemann tensor of an asymptotically flat manifold and the construction of Lorentzian metrics which satisfy the dominant energy condition.
\end{abstract}

\maketitle

\tableofcontents


In this survey article we review recent progress on several curvature problems in
semi-Riemannian geometry, each of which has a certain relation to the positive mass theorem (PMT). The focus is on work in which we were involved within the Priority Program ``Globale Differentialgeometrie.''

\smallskip
The time-symmetric version of the PMT says in particular that an asymptotically flat Riemannian manifold with zero mass is flat. Section \ref{SecOne} investigates whether an asymptotically flat manifold whose mass is \emph{almost zero} must be almost flat in a suitable sense. The general, not necessarily time-symmetric, situation is considered as well. The main tool in this work is the spinor which occurs in Witten's proof of the PMT.

\smallskip
The PMT implies that if the energy $E$ and the momentum $P$ of an asymptotically flat spacelike hypersurface $M$ of a Lorentzian manifold in which the dominant energy condition holds satisfy $E=\abs{P}$, then the Lorentzian metric is flat along $M$. Schoen--Yau proved that in this situation, $M$ with its given second fundamental form can be isometrically embedded as a spacelike graph into Minkowski space-time. The short Section \ref{SecTwo} presents an alternative proof of this fact, based on the Lorentzian version of the fundamental theorem of hypersurface theory due to Bär--Gauduchon--Moroianu.

\smallskip
Section \ref{SecThree} deals with the question which smooth manifolds admit a Lorentzian metric that satisfies the dominant energy condition. Since every closed or asymptotically flat spacelike hypersurface of a Lorentzian manifold can potentially yield a PMT-like obstruction to the dominant energy condition, one should avoid in the construction of dominant energy metrics that such spacelike hypersurfaces exist at all. This can indeed be accomplished in many situations.


\section{Analysis of Asymptotically Flat Manifolds via Witten Spinors} \label{SecOne}
Asymptotically flat Lorentzian manifolds describe isolated gravitating systems (like a star
or a galaxy) in the framework of general relativity. As discovered by Arnowitt, Deser and
Misner~\cite{adm}, to an asymptotically flat Lorentzian manifold one can associate
the total energy and the total momentum, defined globally via the asymptotic behavior of the metric
near infinity. Moreover, the energy-momentum tensor gives a local concept of energy and momentum.
These global and local quantities are linked by Einstein's field equations, giving rise to an
interesting interplay between local curvature and the global geometry of space-time.
The first result which shed some light on the nature of this interplay is the positive energy
theorem~\cite{schoen+yau, schoen+yau2}, which states that if the local energy density is positive
(in the sense that the dominant energy condition holds), then the total energy is also positive. More
recently, the proof of the Riemannian Penrose inequalities \cite{huisken+ilmanen, bray} showed that
in the time-symmetric situation, the total energy is not only positive, but it is even larger than
the energy of the black holes, as measured by the surface area of their horizons.
Despite this remarkable progress, many important problems remain open (see for
example~\cite{mars}).

The aim of our research project was to get a better understanding of how total energy and momentum
control the geometry of space-time. In the special case that energy and momentum vanish, the positive
energy theorem yields that the space-time manifold is flat~\cite{schoen+yau2, parker+taubes}.
This suggests that if total energy and momentum are small,
then the manifold should be almost flat, meaning that curvature should be small. But is this conjecture
really correct? Suppose we consider a sequence of space-time metrics such that
total energy and momentum tend to zero. In which sense do the metrics converge to the flat Minkowski
metric?

Although our considerations could not give definitive answers to these questions, at least they
led to a few inequalities giving some geometric insight, as we will outline in what
follows. For simplicity, we begin
in the Riemannian setting (in general dimension~$n$), whereas the generalizations to
include the second fundamental form will be explained in Section~\ref{secsff}.
All our methods use the Witten spinor as introduced in~\cite{witten}. But in contrast to the spinor
proof of the positive energy theorem~\cite{parker+taubes}, we consider second derivatives
of the Witten spinor~$\psi$. Our starting point is a basic inequality involving the $L^2$-norm of the
second derivatives of~$\psi$ (Section~\ref{secL2}).
Using Sobolev techniques, we deduce curvature estimates, which
however involve the isoperimetric constant of the manifold (Section~\ref{secce1}).
An analysis of the level sets of~$|\psi|$ allows us to get estimates which are
independent of the isoperimetric constant but instead involve a volume bound
(Section~\ref{seclevel}). In the case of an asymptotically Schwarzschild space-time, we
then derive weighted $L^2$-estimates of~$\psi$ which involve the lowest eigenvalue~$\lambda$ of
the Dirac operator on a conformal compactification (Section~\ref{secweighted}).
These weighted~$L^2$-estimates finally give rise to
curvature estimates which involve the global geometry of the manifold
only via~$\lambda$ (Section~\ref{secce2}).
We conclude by an outlook and a discussion of open problems (Section~\ref{secoutlook}).

\subsection{The Riemannian Setting, Asymptotically Flat Manifolds} \label{secriem}
We briefly recall how the Riemannian setting arises within the framework of
general relativity. Suppose that space-time is described by the Lorentzian manifold~$(N^4, \bar{g})$,
which for simplicity we will assume to be orientable and time-orientable.
To describe the splitting into space and time as experienced by an observer, one chooses a
foliation of~$N^4$ by spacelike hypersurfaces. Considering the situation at a fixed observer time,
one restricts attention to one hypersurface~$M^3$ of this foliation. Then~$\bar{g}$ induces on~$M^3$
a Riemannian metric~$g_{ij}$. Furthermore, choosing on~$M$ a future-directed normal unit vector
field~$\nu$, we obtain on~$M$ the second fundamental form~$h_{ij} = (\bar{\nabla}_j \nu)_k$.
The {\em{time-symmetric}} situation is obtained by assuming that~$h$ vanishes identically.
This condition is in particular satisfied if the unit normal~$\nu$ is a Killing field, meaning that the system
is static. In this special case, the geometry at the fixed observer time is completely described
by the Riemannian metric~$g$ on~$M^3$.

The physical condition that the local energy density should be positive gives rise to the
dominant energy condition (see~\cite[Section~4.3]{hawking+ellis} and Section \ref{secdomenergy} in the present article) for the energy-momentum tensor.
Using the Einstein equations, it can also be expressed in terms of the Ricci tensor on~$N^4$.
In the time-symmetric situation, the dominant energy condition reduces to the condition that~$(M^3, g)$
should have {\em{non-negative scalar curvature}}. Every orientable, three-dimensional
Riemannian manifold is spin (see for example~\cite{LaMi}). Therefore, it is a sensible mathematical generalization to consider in what follows a {\em{spin}} manifold~$(M^n, g)$ of dimension~$n \geq 3$ of
non-negative scalar curvature. Moreover, in order to exclude singularities, we shall assume
that~$(M^n, g)$ is {\em{complete}}.

Having isolated gravitating systems in mind, we next want to impose that the
Riemannian metric should approach the Euclidean metric in the ``asymptotic ends'' describing
space near infinity. More precisely, considering for simplicity one asymptotic end, the
manifold~$(M^n,g)$ is said to be {\em{asymptotically flat}} if there is a compact
set~$K \subset M$ and a diffeomorphism $\Phi \,:\, M \setminus K \rightarrow
\R^n \setminus B_{\rho}(0)$, $\rho>0$, such that
\beq \label{gdecay}
(\Phi_* g)_{ij} \;=\; \delta_{ij} \:+\: \O(r^{2-n}) \:,\quad
\partial_k (\Phi_* g)_{ij} \;=\; \O(r^{1-n}) \:,\quad
\partial_{kl} (\Phi_* g)_{ij} \;=\; \O(r^{-n}) \:.
\eeq
These decay conditions imply that scalar curvature is of the order~$\O(r^{-n})$.
We need to make the stronger assumption that {\em{scalar curvature is integrable}}.
In the Riemannian setting, the total energy is also referred to as the {\em{total mass}}~$m$
of the manifold (whereas total momentum vanishes). It is defined by
\begin{equation}
m \;=\; \frac{1}{c(n)} \lim_{\rho \rightarrow \infty}
\int_{S_\rho} (\partial_j (\Phi_* g)_{ij} - \partial_i (\Phi_* g)_{jj})
\:d\Omega^i \:, \label{massdef}
\end{equation}
where $c(n)>0$ is a normalization constant
and $d\Omega^i$ denotes the product of the volume form
on~$S_\rho \subset \R^n$ by the $i^\text{th}$ component of the normal vector on $S_\rho$ (also we use the Einstein summation convention and sum over all indices which appear twice).
The definition~\eqref{massdef} was first given in~\cite{adm}. In~\cite{bartnik} it is proved
that the definition is independent of the choice of~$\Phi$.
The {\em{positive mass theorem}}~\cite{schoen+yau} states that~$m \geq 0$ in the case~$n \leq 7$
(working even for non-spin manifolds). An alternative proof using spinors is given
by~\cite{witten, parker+taubes} and in general dimension in~\cite{bartnik}.

\subsection{An $L^2$-Estimate for the Second Derivatives of the Witten Spinor} \label{secL2}
Before introducing our methods, we briefly recall the spinor proof of the positive mass theorem.
The basic reason why spinors are very useful for the analysis of asymptotically flat
spin manifolds is the {\em{Lichnerowicz-Weitzenb{\"o}ck formula}}
\beq \label{LW}
\D^2 \;=\; -\nabla^2 + \frac{s}{4} \;,
\eeq
which actually goes back to Schr\"odinger~\cite{schroedinger}. Here~$\D$ is the Dirac operator,
$\nabla$ is the spin connection, and~$s$ denotes scalar curvature.
Witten~\cite{witten} considered solutions of the Dirac equation with constant
boundary values~$\psi_0$ in the asymptotic end,
\beq \label{bvp}
\D \psi \;=\; 0 \:,\qquad
\lim_{|x| \rightarrow \infty} \psi(x) \;=\; \psi_0 \quad \text{with} \quad
|\psi_0|=1\:,
\eeq
where~$\psi$ is a smooth section of the spinor bundle~$SM$.
In~\cite{parker+taubes, bartnik} it is proved that for any~$\psi_0$,
this boundary value problem has a unique solution. We refer to~$\psi$ as the
{\em{Witten spinor}} with boundary values~$\psi_0$.
For a Witten spinor, the Lichnerowicz-Weitzenb\"ock formula implies that
\beq \label{div}
\nabla_i \,\langle \psi, \nabla^i \psi \rangle \;=\;
|\nabla \psi|^2 + \frac{s}{4}\: |\psi|^2\:.
\eeq
Integrating over~$M$, applying Gauss' theorem and relating the boundary
values at infinity to the total mass
(where we choose~$c(n)$ in~(\ref{massdef}) appropriately),
one obtains the identity~\cite{witten, parker+taubes, bartnik}
\beq \label{ibp}
\int_M \left( |\nabla \psi|^2 + \frac{s}{4}\, |\psi|^2 \right) d\mu_M \;=\; m\:.
\eeq
As the integrand is obviously non-negative, this identity immediately implies the positive mass theorem for spin manifolds.

We now outline the derivation of an $L^2$-estimate of~$\nabla^2 \psi$ (for details
see~\cite{mass} and~\cite{curv}). We consider similar to~\eqref{div} a divergence,
but now of an expression involving higher derivatives,
\[ \nabla_i \,\langle \nabla_j \psi, \nabla^i \nabla^j \psi \rangle \;=\;
|\nabla^2 \psi|^2 + \langle \nabla_j \psi, \nabla_i \nabla^i \nabla^j \psi \rangle \:. \]
In the third derivative term, we commute~$\nabla^j$ to the left,
\[ \nabla_i \nabla^i \nabla^j \psi = \left[ \nabla_i \nabla^i, \nabla^j \right] \psi
+ \nabla^j \left( \nabla_i \nabla^i \psi \right) . \]
Then in the last summand we can again apply the Lichnerowicz-Weitzenb\"ock formula,
whereas the commutator gives rise to curvature terms. We integrate the resulting
equation over~$M$. Using the faster decay of the higher derivatives of~$\psi$,
integrating by parts does not give boundary terms. Using the the H\"older inequality together
with the inequality
\beq \label{npsi}
\int_M |\nabla \psi|^2 d\mu_M \leq m
\eeq
(which is obvious from~\eqref{ibp}), we obtain the estimate
\beq \boxed{ \quad
\int_M |\nabla^2\psi|^2\,d\mu_M \leq
m \:C_1(n)\: \sup_M |R|
+ \sqrt{m}\:C_2(n)\: \| \nabla R\|_{L^2(M)}\, \sup_M |\psi| \, ,
\quad } \label{L2est}
\eeq
where~$|R| = \sqrt{R_{ijkl} R^{ijkl}}$ denotes the norm of the Riemann tensor.
We remark that in~\cite{mass, curv, kraus} a more general inequality for
$\int_M \eta |\nabla^2\psi|^2\,d\mu_M$ with an arbitrary smooth function~$\eta$ is
considered. By choosing~$\eta$ to be a test function, this makes it possible to ``localize''
the inequality to obtain curvature estimates on the support of~$\eta$.
For simplicity, in this survey article the function~$\eta$ will always be omitted.

\subsection{Curvature Estimates Involving the Isoperimetric Constant} \label{secce1}
In short, curvature estimates are obtained from~\eqref{L2est} by estimating the spinors
by suitable a-priori bounds. We first outline how to treat the second derivative term~$|\nabla^2 \psi|^2$
(for details see~\cite{mass} and~\cite{curv}). The Schwarz inequality yields
\[ \big\langle [\nabla_i, \nabla_j] \psi,  [\nabla_i, \nabla_j] \psi \big\rangle \leq 4\, |\nabla^2 \psi|^2 \:. \]
Rewriting the commutators by curvature, we obtain an expression which
is quadratic in the Riemann tensor. In dimension~$n=3$, one can use the properties of the Clifford
multiplication to obtain
\[ |R|^2\, |\psi|^2 \leq c(n)\: |\nabla^2 \psi|^2\:. \]
In dimension~$n>3$, this inequality is in general wrong. But we get a similar inequality
for a family~$\psi_1, \ldots, \psi_N$ of Witten spinors,
\beq \label{es1}
\sum_{i=1}^N |R|^2\, |\psi_i|^2 \leq c(n)\: \sum_{i=1}^N |\nabla^2 \psi_i|^2\:,
\eeq
where the boundary values~$\lim_{|x| \rightarrow \infty} \psi_i(x)$ form an orthonormal
basis of the spinors at infinity. The family of Witten spinors can be handled most conveniently
by forming the so-called {\em{spinor operator}} (for details see~\cite{curv}).

We next consider the term~$\sup_M |\psi|$ in~\eqref{L2est}. A short calculation using the
Lichnero\-wicz-Weitzenb\"ock formula shows that~$|\psi|$ is {\em{subharmonic}},
(see~\cite[Section~2]{level}),
\beq \label{subharmonic}
\Delta |\psi| \geq \frac{s}{4}\: |\psi| \geq 0 \:.
\eeq
Thus the maximum principle yields that~$|\psi|$ has no interior maximum, and in view
of the boundary conditions at infinity~\eqref{bvp} we conclude that
\beq \label{es2}
\sup_M |\psi| =1 \:.
\eeq

Using~\eqref{es1} and~\eqref{es2} in~\eqref{L2est}, we obtain the estimate
\beq \label{nes}
\int_M |R|^2 \,\Big( \sum\nolimits_{i=1}^N |\psi_i|^2 \Big) \,d\mu_M \leq
m \:C_1(n)\: \sup_M |R| + \sqrt{m}\:C_2(n)\: \|\nabla R\|_{L^2(M)}\, .
\eeq
The remaining task is to estimate the norm of the spinors {\em{from below}}.
Such estimates are difficult to obtain, partly because the norm of the spinor depends
sensitively on the unknown geometry of~$M$. We now begin with the simplest estimates,
whereas more refined methods will be explained in Sections \ref{seclevel} and \ref{secce2}.

The inequality~\eqref{npsi} tells us that, for small~$m$, the derivative of the spinor is
small in the $L^2$-sense, suggesting that in this case the spinor should be almost constant,
implying that~$|\psi|$ should be bounded from below. In order to make this argument precise,
we set~$f=1-|\psi|$ and use the Kato inequality~$|\nabla f| \leq |\nabla \psi|$ to
obtain~$\| \nabla f \|_{L^2(M)} \leq m$. The Sobolev inequality (see~\cite[Section~4]{curv})
for details) implies that
\[ \| f \|_{L^q(M)} \leq \frac{q}{k}\:m \qquad \text{where} \qquad
q= \frac{2n}{n-2} \:, \]
and~$k$ denotes the {\em{isoperimetric constant}}. Thus we only get an integral estimate of~$f$.
But this integral bound also implies that~$f$ is pointwise small, except on a set of small measure.
We thus obtain the following result (see~\cite[Theorem~1.2]{curv}).

\begin{Thm} \label{thm1}
Let $(M^n, g)$, $n \geq 3$, be a complete asymptotically flat Riemannian spin manifold of
non-negative scalar curvature. Then there is a set $\Omega \subset M$ with
\begin{equation}
    \mu(\Omega) \leq \left( \frac{c_3\:m}{k^2} \right)^{\frac{n}{n-2}}
    \label{eq:ta}
\end{equation}
such that the following inequality holds,
\beq \label{cint1}
\int_{M \setminus \Omega} |R|^2 \:d\mu_M \leq
m \:c_1(n)\: \sup_M |R| \:+\:
\sqrt{m}\:c_2(n)\: \| \nabla R\|_{L^2(M)} \;.
\eeq
\end{Thm}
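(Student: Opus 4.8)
The plan is to combine the integral curvature bound \eqref{nes} with a measure-theoretic substitute for a uniform pointwise lower bound on $\sum_{i=1}^{N}|\psi_{i}|^{2}$, the latter being false in general. Recall that \eqref{nes} is at our disposal once \eqref{es1}, \eqref{es2} and the $L^{2}$-estimate \eqref{L2est} have been applied to the family $\psi_{1},\dots,\psi_{N}$ of Witten spinors whose boundary values form an orthonormal basis of the spinors at infinity. The Sobolev estimate for $f:=1-|\psi|$ sketched above shows that $|\psi|$ stays close to $1$ outside a set of small measure, and this is precisely what is needed to pass from \eqref{nes} to \eqref{cint1}.

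In detail, I would put $\psi:=\psi_{1}$ and $f:=1-|\psi|$. By the subharmonicity \eqref{subharmonic}, the maximum principle and the boundary condition \eqref{bvp} one has $0\le f\le 1$ on $M$ and $f\to 0$ at infinity; moreover the decay rates \eqref{gdecay}, combined with the standard weighted elliptic estimates for the Witten spinor, yield the quantitative decay $f=\O(r^{2-n})$ in the asymptotic end. The Kato inequality $|\nabla f|=\bigl|\nabla|\psi|\bigr|\le|\nabla\psi|$ together with \eqref{npsi} gives $\|\nabla f\|_{L^{2}(M)}^{2}\le m$. Since $M$ is non-compact, I would apply the Sobolev inequality to $\chi_{R}f$, where $\chi_{R}$ equals $1$ on a large compact set and is supported in a slightly larger region; the contribution of $\nabla\chi_{R}$ to $\|\nabla(\chi_{R}f)\|_{L^{2}(M)}$ is $\O\bigl(R^{-1}\|f\|_{L^{2}(\{R\le r\le 2R\})}\bigr)=\O(R^{(2-n)/2})$, which tends to $0$, so that letting $R\to\infty$ yields
\[
\|f\|_{L^{q}(M)}\;\le\;\frac{q}{k}\,\|\nabla f\|_{L^{2}(M)}\;\le\;\frac{q}{k}\,\sqrt{m}\,,\qquad q=\frac{2n}{n-2}\,,
\]
the Sobolev constant $q/k$ being governed by the isoperimetric constant $k$ of $(M^{n},g)$.

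Next I would set $\Omega:=\{\,p\in M\suchthat|\psi|(p)\le\frac{1}{2}\,\}=\{\,f\ge\frac{1}{2}\,\}$. Chebyshev's inequality gives
\[
\mu(\Omega)\;\le\;2^{q}\,\|f\|_{L^{q}(M)}^{q}\;\le\;(2q)^{q}\Bigl(\frac{m}{k^{2}}\Bigr)^{q/2}\;=\;\Bigl(\frac{c_{3}\,m}{k^{2}}\Bigr)^{\frac{n}{n-2}}\,,
\]
which is \eqref{eq:ta} with $c_{3}=c_{3}(n)=\bigl(\frac{4n}{n-2}\bigr)^{2}$ (here $q/2=\frac{n}{n-2}$). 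On $M\setminus\Omega$ one has $\sum_{i=1}^{N}|\psi_{i}|^{2}\ge|\psi_{1}|^{2}>\frac{1}{4}$, so restricting the non-negative integrand of \eqref{nes} to $M\setminus\Omega$ and multiplying through by $4$ yields exactly \eqref{cint1}, with $c_{1}(n)=4\,C_{1}(n)$ and $c_{2}(n)=4\,C_{2}(n)$.

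The substantial work lies before the statement: the third-order commutator identity that produces the curvature terms and the decay-controlled integration by parts behind \eqref{L2est}, together with the Clifford-algebraic inequality \eqref{es1} (handled via the spinor operator). Within the assembly above, the one point demanding care is the passage from the Sobolev inequality for compactly supported functions to its global form for $f$ on the non-compact manifold $M$ --- subtle because $f$ need not lie in $L^{2}(M)$ when $n\in\{3,4\}$ --- which is exactly what the cut-off argument exploiting $f=\O(r^{2-n})$ accomplishes; the remaining steps reduce to Chebyshev's inequality and the bookkeeping of dimensional constants.
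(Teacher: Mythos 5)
Your proof follows the same route as the paper's: define $f=1-|\psi|$, use Kato plus \eqref{npsi} and the isoperimetric Sobolev inequality to get an $L^q$-bound with $q=2n/(n-2)$, take $\Omega=\{f\ge\tfrac12\}$ so Chebyshev gives \eqref{eq:ta}, and then on $M\setminus\Omega$ the lower bound $\sum_i|\psi_i|^2\ge|\psi_1|^2>\tfrac14$ turns \eqref{nes} into \eqref{cint1}. Your version even fills in a detail the survey glosses over (the cutoff argument needed to apply the Sobolev inequality on the non-compact end) and implicitly corrects the typo $\|\nabla f\|_{L^2}\le m$ in the text to $\|\nabla f\|_{L^2}\le\sqrt m$, which is what makes the exponents in \eqref{eq:ta} come out right.
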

This theorem quantifies that  the manifold indeed becomes flat in the limit~$m \searrow 0$,
provided that~$\sup_M |R|$ and~$\| \nabla R\|_{L^2(M)}$
are uniformly bounded and that the isoperimetric constant is bounded away from zero.
The appearance of the isoperimetric constant and of the exceptional set~$\Omega$
can be understood from the following simple example. We choose on~$M^3=\R^3$
the Schwarzschild metric~$g_{ij}(x) = (1-2m/ |x|)^4 \,\delta_{ij}$ (in order to clarify the
connection to the construction in Section~\ref{secriem}, we remark that this~$g_{ij}$
is isometric to the induced Riemannian metric on the $t=\text{const}$ slice of the standard
Schwarzschild space-time). For the geometric understanding, it is helpful to
isometrically embed~$M^3$ into the Euclidean~$\R^4$ (see the left of Figure~\ref{fig2}).
This shows that~$M^3$ has two asymptotic ends, one as~$|x| \rightarrow \infty$ and
the other as~$|x| \rightarrow 0$. The minimal hypersurface~$r=m/2$ has the interpretation
as the {\em{event horizon}}.
\begin{figure}[t]
\begin{picture}(0,0)%
\includegraphics{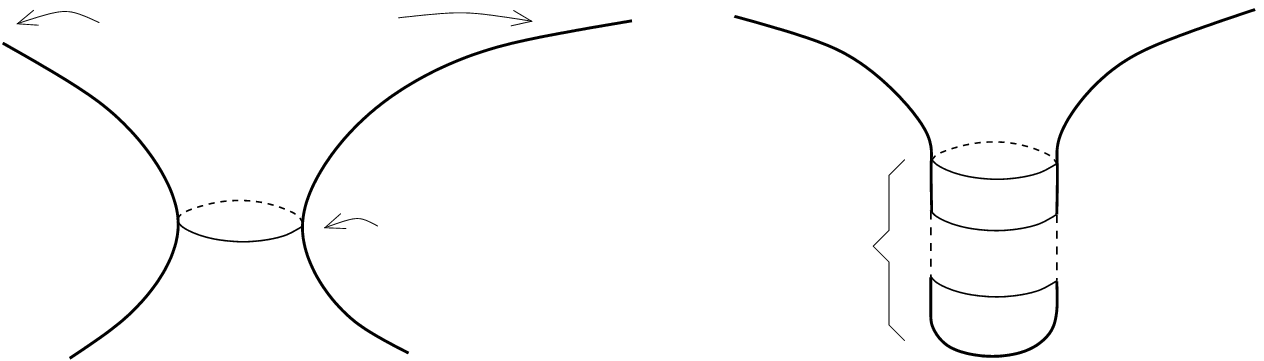}%
\end{picture}%
\setlength{\unitlength}{1657sp}%
\begingroup\makeatletter\ifx\SetFigFontNFSS\undefined%
\gdef\SetFigFontNFSS#1#2#3#4#5{%
  \reset@font\fontsize{#1}{#2pt}%
  \fontfamily{#3}\fontseries{#4}\fontshape{#5}%
  \selectfont}%
\fi\endgroup%
\begin{picture}(14376,4176)(1723,-1444)
\put(3023,2369){\makebox(0,0)[lb]{\smash{{\SetFigFontNFSS{11}{14.4}{\rmdefault}{\mddefault}{\updefault}asymptotic end}}}}
\put(11112,-289){\makebox(0,0)[lb]{\smash{{\SetFigFontNFSS{11}{14.4}{\rmdefault}{\mddefault}{\updefault}$L$}}}}
\put(6173,-46){\makebox(0,0)[lb]{\smash{{\SetFigFontNFSS{11}{14.4}{\rmdefault}{\mddefault}{\updefault}horizon $\displaystyle |x|=\frac{m}{2}$}}}}
\end{picture}%
\caption{The Schwarzschild metric (left) and the manifold after gluing (right).}
\label{fig2}
\end{figure}
In order to get a manifold with one asymptotic end, we cut~$M^3$ at the event horizon
and glue in a cylinder of length~$L$ as well as a spherical cap (see the right of Figure~\ref{fig2}).
This manifold clearly has non-negative scalar curvature.
In the limit~$m \searrow 0$, the resulting manifold becomes flat outside the event horizon.
The region inside the event horizon, however, does not become flat, because the radius of the cylinder
shrinks to zero. This explains why we need an exceptional set. In the limit~$L \rightarrow \infty$,
the volume of this exceptional set necessarily tends to infinity. This is in agreement with~\eqref{eq:ta}
because in this limit, the isoperimetric constant tends to zero.

\subsection{A Level Set Analysis, Curvature Estimates Involving a Volume Bound} \label{seclevel}
The last example explains why working with the volume of an exceptional set might not
be the best method. Namely, in the situation of Figure~\ref{fig2}, it seems preferable
to consider the {\em{surface area}} of the exceptional set. Then cutting at the event horizon,
the long cylinder has disappeared, and we no longer need to worry about the limit when~$L$
gets large. Working with the surface area also seems preferable for physical reasons.
First, as the interior of a black hole is not accessible to measurements, our estimates should
not depend on the geometry inside the event horizon. Therefore, choosing the exceptional set~$\Omega$
such that it contains the interior of the event horizon, our estimates should not depend on
the volume of~$\Omega$, only on its surface area.
Second, the Riemannian Penrose inequalities yield that if the total mass is small, the area of the event horizon is also small. Thus we can hope that there should be an exceptional set of small surface area.

The basic question is how to choose the exceptional set~$\Omega$. In view of the estimate \eqref{nes},
it is tempting to choose the exceptional set as the set where the Witten spinor
(or similarly the spinor operator) is small, i.e.
\beq \label{Odef}
\Omega(\tau) = \{x \in M \text{ with } |\psi(x)| < \tau \}
\eeq
for some~$\tau \in (0,1]$.
This has the advantage that in the region $M \setminus \Omega$, the Witten spinor
is by construction bounded from below by~$\tau$, so that~\eqref{es2} immediately gives rise
to a curvature estimate. Clearly, the resulting estimates are of use only if the exceptional set
is small, for example in the sense that it has small surface area. This consideration was our
motivation for analyzing the {\em{level sets}}
of the Witten spinor~\cite{level}. We here outline a few results of this analysis.

We set~$\phi = |\psi|$ and introduce the functional
\beq \label{Fdef}
F(\tau) = \int_{\Omega(\tau)} |D \phi|^2\, d\mu_M\:.
\eeq
Using the Lichnerowicz-Weitzenb\"ock formula, it is straightforward to verify that this
functional is convex. Moreover, combining the co-area formula and the Schwarz inequality,
one finds that for all~$t_0, t_1$ with~$0 < t_0 < t_1 < 1$, the area~$A$ and the volume~$V$
of the sets $\Omega(\tau)$ and~$\Omega(\tau')$ are related by
\[ \int_{t_0}^{t_1} A(\sigma)\, d\sigma \;\leq\;
\sqrt{ \left( V(t_1) - V(t_0) \right) \left(F(t_1) - F(t_1) \right) } \:. \]
Using the mean value theorem, there is~$t \in [t_0, t_1]$ with
\[ A(t) \;\leq\; \sqrt{F(t_1)-F(t_0)}\; \frac{\sqrt{V(t_1) - V(t_0)}}{t_1-t_0} \:. \]
Furthermore, Sard's lemma can be used to arrange that~$A(t)$ is a hypersurface.
Choosing the exceptional set~$\Omega = \Omega(t)$, the inequality~\eqref{nes} gives
rise to the following curvature estimate.

\begin{Thm} \label{thmg1}
Let~$(M^n, g)$, $n \geq 4$,
be a complete, asymptotically flat manifold whose
scalar curvature is non-negative and integrable.
Suppose that for an interval~$[t_0, t_1]
\subset (0,1]$ there is a constant~$C$ such that every
Witten spinor~(\ref{bvp}) satisfies the volume bound
\beq \label{volume}
V(t_1) - V(t_0) \;\leq\; C \:.
\eeq
Then there is an open set~$\Omega \subset M$ with the following
properties. The $(n-1)$-dimensional Hausdorff measure~$\mu_{n-1}$ of the
boundary of~$\Omega$ is bounded by
\[ \mu_{n-1}(\partial \Omega) \;\leq\; \sqrt{m}\;c_0(n,t_0)\; \frac{\sqrt{C}}{t_1-t_0} \:. \]
On the set~$M \setminus \Omega$, the Riemann tensor satisfies the inequality
\[ \int_{M \setminus \Omega} |R|^2 \;\leq\;
m \:c_1(n,t_0)\: \sup_M |R| \:+\:
\sqrt{m}\:c_2(n,t_0)\: \|\nabla R\|_{L^2(M)} \:. \]
\end{Thm}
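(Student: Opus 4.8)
The plan is to reduce everything --- once the estimate \eqref{nes} is taken for granted --- to three elementary ingredients: an a-priori bound for the functional $F$ of \eqref{Fdef}, a co-area argument that selects a good level value $t$ of $\phi=\abs\psi$, and the passage from \eqref{nes} to the curvature estimate on $M\setminus\Omega(t)$. For the first, the pointwise Kato inequality $\abs{D\phi}\le\abs{\nabla\psi}$ together with \eqref{npsi} gives, for every $\tau\in(0,1]$, $F(\tau)=\int_{\Omega(\tau)}\abs{D\phi}^2\,d\mu_M\le\int_M\abs{\nabla\psi}^2\,d\mu_M\le m$; since $\tau\mapsto\Omega(\tau)$ is increasing and the integrand is non-negative, $F$ is non-decreasing, so in particular $F(t_1)-F(t_0)\le m$ (the convexity of $F$ noted above is not needed here).

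Next I would apply the co-area formula to $\phi$ over the annular region $\set{t_0<\phi<t_1}$. Writing $A(\sigma)=\mu_{n-1}(\set{\phi=\sigma})$, this yields $\int_{t_0}^{t_1}A(\sigma)\,d\sigma=\int_{\set{t_0<\phi<t_1}}\abs{D\phi}\,d\mu_M$, and the Schwarz inequality, combined with $\mu(\set{t_0<\phi<t_1})\le V(t_1)-V(t_0)$ and $\int_{\set{t_0<\phi<t_1}}\abs{D\phi}^2\,d\mu_M\le F(t_1)-F(t_0)$, recovers the first displayed inequality before the statement and hence $\int_{t_0}^{t_1}A(\sigma)\,d\sigma\le\sqrt{(V(t_1)-V(t_0))(F(t_1)-F(t_0))}\le\sqrt{Cm}$, by the hypothesis \eqref{volume} and the first step. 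Thus the essential infimum of $A$ on $[t_0,t_1]$ is at most $\sqrt{Cm}/(t_1-t_0)$, so a positive-measure set of levels $t\in(t_0,t_1)$ satisfies $A(t)\le\sqrt{Cm}/(t_1-t_0)$; since $t>t_0>0$ the set $\set{\phi=t}$ lies in the open region where $\psi$ does not vanish, on which $\phi$ is smooth, so Sard's lemma lets me in addition take $t$ to be a regular value, making $\Omega(t)$ open with $\partial\Omega(t)=\set{\phi=t}$ a hypersurface of $(n-1)$-measure $A(t)$.

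Finally, since $n\ge4$ the Clifford-algebraic inequality \eqref{es1} is available only for a family $\psi_1,\dots,\psi_N$ of Witten spinors whose boundary values form an orthonormal basis at infinity (with $N$ depending only on $n$), so I would run the first two steps for each $\psi_i$ and pick one level $t\in(t_0,t_1)$ that is simultaneously good for, and a regular value of, all of them --- possible because only finitely many smooth functions are involved. Setting $\Omega=\bigcup_{i=1}^N\set{\abs{\psi_i}<t}$ gives an open set whose boundary lies in the finite union $\bigcup_i\set{\abs{\psi_i}=t}$ of hypersurfaces, so $\mu_{n-1}(\partial\Omega)\le\sum_{i=1}^N A_i(t)\le N\sqrt{Cm}/(t_1-t_0)$ (with $A_i$ the area function of $\psi_i$), which is the first assertion with $c_0(n,t_0)$ of this form; and on $M\setminus\Omega$ every $\abs{\psi_i}\ge t>t_0$, so $\sum_{i=1}^N\abs{\psi_i}^2\ge Nt_0^2$ there and \eqref{nes} yields $Nt_0^2\int_{M\setminus\Omega}\abs R^2\,d\mu_M\le\int_M\abs R^2\big(\sum_{i=1}^N\abs{\psi_i}^2\big)\,d\mu_M\le m\,C_1(n)\sup_M\abs R+\sqrt m\,C_2(n)\,\|\nabla R\|_{L^2(M)}$, i.e.\ the second assertion with $c_j(n,t_0)=C_j(n)/(Nt_0^2)$. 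Within the scope of this statement the assembly above is routine; the real substance sits in the two earlier facts it uses --- the $L^2$-estimate \eqref{L2est}, whose proof needs the refined decay of $\nabla\psi$ and $\nabla^2\psi$ so that the repeated integrations by parts leave no boundary terms at infinity, and the dimension-independent inequality \eqref{es1} --- while the one point of my argument that needs a little care, the simultaneous choice of the regular level $t$ for all $N$ spinors, is settled by Sard's lemma.
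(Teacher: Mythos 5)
Your proposal is correct and follows essentially the route sketched in the paper: the co-area formula plus Cauchy--Schwarz over the band $\set{t_0<\phi<t_1}$, a mean-value selection of a good level $t$, Sard's lemma to make $t$ a regular value, and then \eqref{nes} restricted to the complement of the sublevel set. You also correctly supply the bound $F(t_1)-F(t_0)\le m$ via the Kato inequality and \eqref{npsi}, which the paper's sketch leaves implicit and which is exactly what produces the factor $\sqrt{m}$ in the surface-area estimate; you are right that the convexity of $F$ is not needed for this.

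Two small remarks. First, the step ``pick one level $t$ simultaneously good for all $\psi_i$'' is not quite justified as stated: for each $i$ the set of levels with $A_i(t)\le\sqrt{Cm}/(t_1-t_0)$ has positive measure, but these $N$ sets need not intersect. The standard fix is to run the mean-value/Chebyshev argument directly on $\sum_i A_i$: from $\int_{t_0}^{t_1}\sum_i A_i\,d\sigma\le N\sqrt{Cm}$ one obtains a positive-measure set of levels with $\sum_i A_i(t)\le N\sqrt{Cm}/(t_1-t_0)$, and this set can be intersected with the conull set of levels that are regular values of every $\abs{\psi_i}$. This yields exactly the estimate you end up invoking. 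Second, taking $\Omega$ to be the union over all $i$ is more than is needed: if $\Omega=\set{\abs{\psi_1}<t}$ for a single spinor, then on $M\without\Omega$ one still has $\sum_i\abs{\psi_i}^2\ge\abs{\psi_1}^2\ge t_0^2$, so \eqref{nes} gives the curvature estimate with $c_j(n,t_0)=C_j(n)/t_0^2$, while the surface-area bound acquires no factor of $N$. This is what the paper's ``choosing $\Omega=\Omega(t)$'' appears to mean. Both variants prove the theorem.
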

For clarity, we point out that~\eqref{volume} only involves the volume of the
region
\beq \label{omegam}
\Omega(t_1) \setminus \Omega(t_0) = \{ x \text{ with } t_0 \leq |\psi| < t_1 \}\:.
\eeq
Thus in order to apply our theorem to the example of Figure~\ref{fig2}, we can choose~$t_0$
such that~$\Omega(t_0)$ just includes the region inside the event horizon. Then the
statement of the theorem no longer depends on the parameter~$L$.
This consideration also explains how it is possible
that Theorem~\ref{thmg1} no longer involves the isoperimetric constant.

\subsection{Weighted $L^2$-Estimates of the Witten Spinor} \label{secweighted}
The curvature estimate in Theorem~\ref{thmg1} has the disadvantage that it involves
the a-priori bound~\eqref{volume} on the volume of the region~$\Omega(t_1) \setminus
\Omega(t_0)$. Since in this region, the spinors are bounded from above and below,
the volume bound could be obtained from an $L^p$-estimate of the Witten spinor for
any~$p < \infty$. Our search for such estimates led to the weighted $L^2$-estimates
in~\cite{weighted}, which we now outline.  A point of general interest is that these estimates involve
the smallest eigenvalue of the Dirac operator on a conformal compactification of~$M$,
thus giving a connection to spectral geometry.

For technical simplicity, the weighted $L^2$-estimates were derived under the additional assumption
that space-time is {\em{asymptotically Schwarzschild}}. Thus we assume that there is a
a compact set~$K \subset M$ and a diffeomorphism $\Phi \,:\, M \setminus K \rightarrow
\R^n \setminus B_{\rho}(0)$, $\rho>0$, such that
\[ (\Phi_* g)_{ij} \;=\; \left(1+\frac{1}{|x|^{n-2}}\right)^{\frac{4}{n-2}}\:\delta_{ij}\:. \]
Then outside the compact set, the metric is conformally flat, and thus by a conformal transformation
\begin{equation} \label{cc}
\tilde{g} \;=\; \lambda^2\: g
\end{equation}
with a smooth function~$\lambda$ with~$\lambda|_K \equiv 1$ we can arrange
that~$\tilde{g}|_{M \setminus K}$ is isometric to a spherical cap of radius~$\sigma$
with the north pole removed.
By adding the north pole, we obtain the complete manifold~$(\bar{M}, \tilde{g})$, being a
conformal one-point compactification of~$(M, g)$ (see Figure~\ref{fig1}).
\begin{figure}[t]%
\begin{picture}(0,0)%
\includegraphics{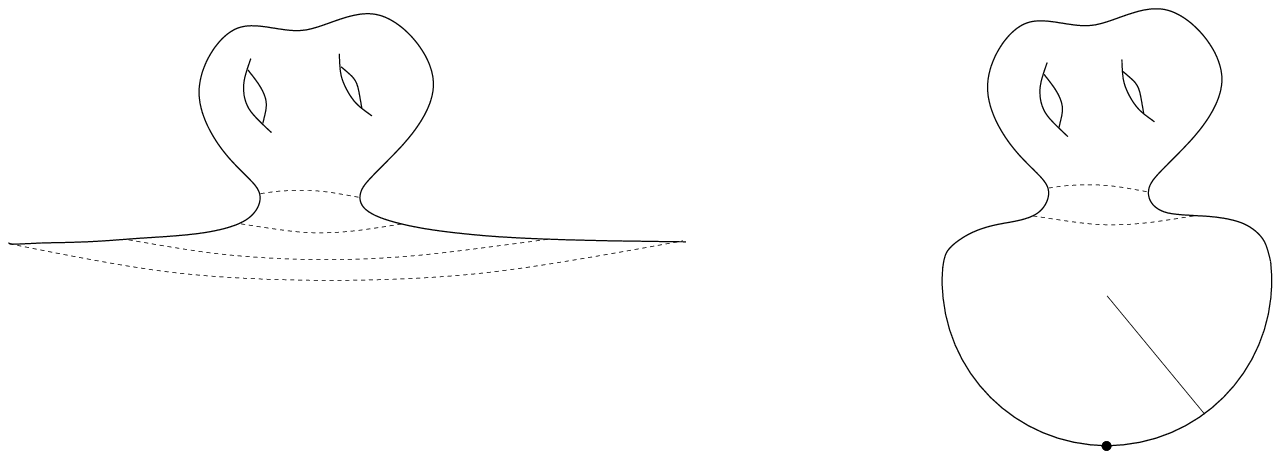}%
\end{picture}%
\setlength{\unitlength}{1243sp}%
\begingroup\makeatletter\ifx\SetFigFont\undefined
\def\x#1#2#3#4#5#6#7\relax{\def\x{#1#2#3#4#5#6}}%
\expandafter\x\fmtname xxxxxx\relax \def\y{splain}%
\ifx\x\y   
\gdef\SetFigFont#1#2#3{%
  \ifnum #1<17\tiny\else \ifnum #1<20\small\else
  \ifnum #1<24\normalsize\else \ifnum #1<29\large\else
  \ifnum #1<34\Large\else \ifnum #1<41\LARGE\else
     \huge\fi\fi\fi\fi\fi\fi
  \csname #3\endcsname}%
\else
\gdef\SetFigFont#1#2#3{\begingroup
  \count@#1\relax \ifnum 25<\count@\count@25\fi
  \def\x{\endgroup\@setsize\SetFigFont{#2pt}}%
  \expandafter\x
    \csname \romannumeral\the\count@ pt\expandafter\endcsname
    \csname @\romannumeral\the\count@ pt\endcsname
  \csname #3\endcsname}%
\fi
\fi\endgroup
\begin{picture}(19443,7088)(-6944,-8934)
\put(-1154,-3436){\makebox(0,0)[lb]{\smash{\SetFigFont{11}{13.2}{rm}$K$}}}
\put(10801,-3391){\makebox(0,0)[lb]{\smash{\SetFigFont{11}{13.2}{rm}$K$}}}
\put(9841,-8656){\makebox(0,0)[lb]{\smash{\SetFigFont{11}{13.2}{rm}$\mathfrak{n}$}}}
\put(8101,-7636){\makebox(0,0)[lb]{\smash{\SetFigFont{11}{13.2}{rm}$C$}}}
\put(10786,-7081){\makebox(0,0)[lb]{\smash{\SetFigFont{11}{13.2}{rm}$\sigma$}}}
\put(5971,-2281){\makebox(0,0)[lb]{\smash{\SetFigFont{11}{13.2}{rm}$(\bar{M}, \tilde{g})$}}}
\put(-6944,-2266){\makebox(0,0)[lb]{\smash{\SetFigFont{11}{13.2}{rm}$(M, g)$}}}
\put(1966,-5401){\makebox(0,0)[lb]{\smash{\SetFigFont{11}{13.2}{rm}$M \setminus K$}}}
\end{picture}%
\caption{The asymptotically Schwarzschild manifold~$(M,g)$ and its
conformal compactification~$(\bar{M}, \tilde{g})$.}%
\label{fig1}%
\end{figure}
The manifold~$(\bar{M}, \tilde{g})$ is again spin. We denote its Dirac operator by~$\tilde{\D}$.

In order to improve the decay properties of the spinor
at infinity, in the asymptotic end we subtract from~$\psi$
a constant spinor multiplied by a function coming from the conformal weight of the sphere,
\[ \delta \psi(x) = \psi(x) - \left( 1 + \frac{1}{|x|^{n-2}}
\right)^{-\frac{n-1}{n-2}}\: \psi_0 \qquad \text{on $M \setminus K$}\:. \]
Under these assumptions, in~\cite{weighted} we prove the following theorem.
\begin{Thm} \label{thmweighted}
Every Witten spinor satisfies the inequality
\[ \int_K \|\psi(x)\|^2 \:dx \:+\:
\int_{M \setminus K} \|\delta \psi(x)\|^2\:\lambda(x)\:dx
\;\leq\; c(n)\: \frac{(\rho+1)^n}{\sigma^2\, \inf \spec({\tilde{\D}}^2)} \:. \]
\end{Thm}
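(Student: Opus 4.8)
The plan is to combine a weighted version of the Lichnerowicz--Weitzenb\"ock identity on the conformal compactification $(\bar M,\tilde g)$ with the spectral gap of $\tilde\D$. First I would pass to the compactified picture: since $\tilde g=\lambda^2 g$ and the Dirac operator is conformally covariant, a Witten spinor $\psi$ on $(M,g)$ corresponds, after multiplication by an appropriate power of $\lambda$, to a spinor $\tilde\psi$ on $\bar M$ that satisfies an inhomogeneous equation $\tilde\D\tilde\psi=\xi$, where the source term $\xi$ is supported near the added north pole $\mathfrak n$ and is controlled by the difference $\delta\psi$ (this is exactly why $\delta\psi$, rather than $\psi$ itself, was introduced: subtracting the conformal weight of the sphere kills the would-be singular contribution of $\psi_0$ at $\mathfrak n$, leaving a source that is bounded in terms of the stated decay rates $\O(r^{2-n})$ etc.\ together with the asymptotically Schwarzschild hypothesis). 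The weight $\lambda$ in the second integral of the statement is the residual conformal factor, and the factor $1/\sigma^2$ reflects the curvature normalization of the spherical cap of radius $\sigma$.

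The heart of the argument is then a spectral estimate on $\bar M$. Since $(\bar M,\tilde g)$ is complete and $\tilde\D^2$ has a positive infimum of its spectrum, for any spinor $\Psi$ in the domain of $\tilde\D$ one has $\|\tilde\D\Psi\|_{L^2(\bar M)}^2\ge \inf\spec(\tilde\D^2)\,\|\Psi\|_{L^2(\bar M)}^2$. Applying this to $\Psi=\tilde\psi$ and using $\tilde\D\tilde\psi=\xi$ gives
\beq
\|\tilde\psi\|_{L^2(\bar M)}^2 \;\leq\; \frac{\|\xi\|_{L^2(\bar M)}^2}{\inf\spec(\tilde\D^2)}\:.
\eeq
Translating the left-hand side back to $(M,g)$ via the conformal change $\tilde g=\lambda^2 g$ and the corresponding pointwise relation between $|\tilde\psi|$ and $|\psi|$ (resp.\ $|\delta\psi|$ in the end) reproduces exactly $\int_K\|\psi\|^2\,dx+\int_{M\setminus K}\|\delta\psi\|^2\,\lambda\,dx$ up to the dimensional constant $c(n)$, because $\lambda\equiv 1$ on $K$ and equals the spherical-cap weight outside. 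Here one must be slightly careful: a priori one only controls $\tilde\psi$ in $L^2$, so to make the spectral inequality legitimate one verifies that $\tilde\psi$ lies in the form domain of $\tilde\D^2$ — this follows from the $L^2$-bound \eqref{npsi} on $\nabla\psi$ together with the elliptic estimates for the Witten spinor used in its construction (\cite{parker+taubes, bartnik}), after checking that these survive the conformal compactification.

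Finally I would estimate $\|\xi\|_{L^2(\bar M)}$. Since $\xi$ is localized where the interpolation between the flat end and the spherical cap takes place, bounding it amounts to inserting the decay rates of $g$ and of $\psi-\psi_0$ into the conformal transformation law for $\D$; a direct computation on the annular region $\Phi^{-1}(B_{\rho+1}(0)\setminus B_\rho(0))$, whose Euclidean volume is $\O((\rho+1)^n)$, yields $\|\xi\|_{L^2}^2\le c(n)(\rho+1)^n/\sigma^2$, which is the source of the factor $(\rho+1)^n/\sigma^2$ in the statement. I expect the genuinely delicate point to be the bookkeeping of the conformal weights — making sure that the power of $\lambda$ that turns $\psi$ into a solution of $\tilde\D\tilde\psi=\xi$, the power appearing as the measure weight in the theorem, and the power hidden in the definition of $\delta\psi$ are mutually consistent — and, relatedly, verifying that the subtracted term $(1+|x|^{2-n})^{-(n-1)/(n-2)}\psi_0$ extends smoothly across $\mathfrak n$ so that $\tilde\psi$ is an honest spinor on $\bar M$ with no distributional contribution at the north pole. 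Everything else is either a standard spectral inequality or a routine decay computation.
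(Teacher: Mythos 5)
Your proposal takes a genuinely different route from the paper's. The paper does \emph{not} directly work with $\tilde\D\tilde\Psi=\xi$ and a spectral bound; instead it represents the (conformally transformed) Witten spinor operator as a quadratic expression in the Dirac Green's function on $\bar M$, integrates to obtain the Green's function $G$ of $\tilde\D^2$ minus counterterms, compares $G$ near $\mathfrak n$ with the Green's function on the round sphere using Sobolev techniques inside the cap, and invokes the operator-norm bound $\|G\|=\sup\spec(G)=1/\inf\spec(\tilde\D^2)$ together with a positivity argument for the Witten operator and its $\delta\psi_i$-analogue. Your plan, by contrast, is: subtract off a Schwarzschild-harmonic comparison spinor, compactify, and apply the elementary estimate $\|\tilde\D^{-1}\|_{\text{op}}=(\inf\spec\tilde\D^2)^{-1/2}$ to an inhomogeneous Dirac equation with $L^2$ source. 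This is a clean and, if completed, more elementary way to get the same conclusion; the paper's Green's function approach gives pointwise, rather than only $L^2$, information near $\mathfrak n$, which is presumably what they need for the finer spectral analysis, but it is more work for the statement at hand.

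Two points in your write-up should be fixed to make the argument watertight. First, you mislocate the source: the subtracted term $\chi=(1+|x|^{2-n})^{-(n-1)/(n-2)}\psi_0$ is \emph{exactly} $\D_g$-harmonic on $M\setminus K$ (this is precisely the conformal transform of the constant $\delta$-harmonic spinor $\psi_0$ to the Schwarzschild metric), so the residual $\xi$ comes entirely from the cutoff that extends $\chi$ across $K$, and is supported on a compact region near $\mfbd K$ — i.e.\ \emph{far} from $\mathfrak n$, not near it. This is good news for you (no distributional contribution at $\mathfrak n$ to worry about once you know $\tilde\Psi$ is bounded there), but it changes the bookkeeping. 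Second, the quantitative estimate $\|\xi\|_{L^2(\bar M)}^2\le c(n)(\rho+1)^n/\sigma^2$ is asserted but not derived, and the origin of the $1/\sigma^2$ factor is unclear from what you wrote: near $\mfbd K$ one has $\lambda\approx1$ by construction, so a naive computation of $\int\lambda^{-1}|d\eta|^2|\chi|^2\,d\mu_g$ over an annulus of width $\sim\rho$ gives $\rho^{n-2}$, with no visible $\sigma$. One recovers the stated form only after using the constraint that $\lambda|_K\equiv1$ together with the explicit stereographic conformal factor $\lambda^2\sim 4\sigma^4/(\sigma^2+|x|^2)^2$, which forces $\sigma\sim\rho$ up to lower-order corrections — a relation you neither state nor exploit. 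Without tracking this, the $\sigma$-dependence in your bound does not follow. Finally, the claim that $\tilde\Psi$ lies in the domain of $\tilde\D$ needs the decay $\delta\psi=\O(|x|^{1-n})$ (not the weaker $\O(|x|^{2-n})$ you appear to use), which holds precisely because the subtraction of $\chi$ cancels the leading $|x|^{2-n}$ term; this is the quantitative content behind ``$\delta\psi$ extends across $\mathfrak n$'' and should be stated explicitly.
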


We now sketch the main steps of the proof, also explaining how the
infimum of the spectrum of the operator~${\tilde{\D}}^2$ enters.
Our first step is to get a connection between the
conformally transformed spinor operator and
a quadratic expression in the Dirac Green's function on~$\bar{M}$.
After subtracting suitable counter terms, we can integrate this expression
over~$\bar{M}$ to obtain the Green's function~$G$
of the square of the Dirac operator minus suitable counter
terms. Then our task becomes to analyze the behavior of~$G$
near the pole~$\mathfrak{n}$ of the spherical cap. This is
accomplished by taking the difference of~$G$ and the Green's function
on the sphere and using Sobolev techniques inside the spherical cap.
In this analysis, we need to estimate the sup-norm of~$G$ in the Hilbert
space~$L^2(\bar{M}, S\bar{M})$ by
\[ \| G \| = \sup \spec (G) = \frac{1}{\inf \spec({\tilde{\D}}^2)}\:. \]
The theorem then follows by using a positivity argument for the
Witten operator and a similar operator built up of the corresponding
wave functions~$\delta \psi_i$.

\subsection{Curvature Estimates Involving the Lowest Eigenvalue on a Conformal Compactification}
\label{secce2}
We now outline how Theorem~\ref{thmweighted} can be used to satisfy the volume
bound~\eqref{volume} in Theorem~\ref{thmg1}. For simplicity, we choose~$t_0=1/4$ and~$t_1=1/2$.
The main step is to prove that choosing the radius
\[ r_1 \::=\: c(n) \,\sigma \left(\sigma \inf \spec |\tilde{D}| \right)^{-\frac{1}{n-1}} \:, \]
the Witten spinor is bounded from below by
\beq \label{lower}
|\psi(x)| \geq \frac{1}{2} \qquad \text{for all~$x \in M \setminus K$ with~$|x| > r_1$}\:.
\eeq
This is achieved by combining elliptic estimates in the spherical cap with spectral estimates
for~${\mathcal{D}}^2$. Then the inequality~\eqref{lower} 
allows us to estimate the volume difference in~\eqref{volume} by
\[ \begin{split}
V \Big( \frac{1}{2} \Big) - V \Big( \frac{1}{4} \Big)
&\leq 16 \int_{\Omega(1/2)} |\psi|^2\: d\mu_M
\leq 16 \int_{B_{r_1}(0)} |\psi|^2\: d\mu_M\\
&\leq 16\int_K\abs{\psi}^2\: d\mu_M +\int_{\set{x\in M\setminus K \;\text{with}\; \abs{x}>r_1}} \abs{\psi}^2\: d\mu_M \:.
\end{split} \]
Using the upper bound~\eqref{es2}, we obtain
\[
V \Big( \frac{1}{2} \Big) - V \Big( \frac{1}{4} \Big)
\leq \int_K |\psi|^2\: d\mu_M \:+\: \mu \Big(
\{x \in M \setminus K \;\text{with}\; |x| \leq r_1 \} \Big) . \]
The first summand can be estimated by Theorem~\ref{thmweighted}, whereas the
second summand can be bounded by the volume of a Euclidean ball of radius~$r_1$.
This method gives the following results (see~\cite[Theorems~1.4 and~4.5]{level}).

\begin{Thm} \label{thmg2}
Let~$(M^n, g)$, $n \geq 3$, be a complete manifold of non-negative
scalar curvature such that~$M \setminus K$ is isometric to
the Schwarzschild geometry.
Then there is an open set~$\Omega \subset M$ with the following
properties. The $(n-1)$-dimensional Hausdorff measure~$\mu_{n-1}$ of the
boundary of~$\Omega$ is bounded by
\beq \label{surf}
\mu_{n-1}(\partial \Omega) \leq c_0(n)\, \sqrt{m}\;
\frac{\left( \rho+m^{\frac{1}{n-2}} \right)^{\frac{n}{2}}}
{\sigma\, \inf \spec |\tilde{\D}|} \:.
\eeq
On the set~$M \setminus \Omega$, the Riemann tensor satisfies the inequality
\[ \int_{M \setminus \Omega} |R|^2 \leq
m \:c_1(n)\: \sup_M |R|  +
\sqrt{m}\:c_2(n)\: \|\nabla R\|_{L^2(M)} \:. \]
\end{Thm}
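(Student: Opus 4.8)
The plan is to derive Theorem~\ref{thmg2} as a special case of Theorem~\ref{thmg1}, with the volume bound~\eqref{volume} supplied by the weighted $L^2$-estimate of Theorem~\ref{thmweighted}. Since $M\setminus K$ is exactly Schwarzschild (not merely asymptotically so), all the hypotheses of Theorems~\ref{thmg1} and~\ref{thmweighted} are met; in particular the conformal compactification $(\bar M,\tilde g)$ and the conformally weighted spinor $\delta\psi$ are defined as in Section~\ref{secweighted}. Fix the level-set parameters $t_0=1/4$, $t_1=1/2$ once and for all, so that the $n$- and $t_0$-dependent constants of Theorem~\ref{thmg1} become pure dimensional constants $c_0(n),c_1(n),c_2(n)$.

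First I would establish the pointwise lower bound~\eqref{lower}: there is a radius $r_1=c(n)\,\sigma\,(\sigma\inf\spec|\tilde\D|)^{-1/(n-1)}$ such that $|\psi(x)|\ge\frac12$ for all $x\in M\setminus K$ with $|x|>r_1$. This is the analytic heart of the argument. The idea is that $\delta\psi$, being the Witten spinor minus its conformal-weight model term, has improved decay, and by Theorem~\ref{thmweighted} its weighted $L^2$-norm over $M\setminus K$ is controlled by $c(n)(\rho+1)^n/(\sigma^2\inf\spec(\tilde\D^2))$. One transplants $\delta\psi$ to the spherical cap via the conformal rescaling~\eqref{cc}, where it satisfies an elliptic (Dirac-type) equation with coefficients bounded in terms of $\sigma$; standard interior elliptic estimates then upgrade the $L^2$-bound to a sup-bound on $|\delta\psi|$ away from a neighborhood of the pole $\mathfrak n$, i.e.\ for $|x|$ large. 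Since on $M\setminus K$ the model term tends to $\psi_0$ with $|\psi_0|=1$ as $|x|\to\infty$ and is monotone in $|x|$, one gets $|\psi|\ge 1-|\delta\psi|\ge\frac12$ once $|x|$ exceeds the stated $r_1$; tracking how the elliptic constants scale with $\sigma$ and how the weighted $L^2$-bound scales gives precisely the exponent $-\tfrac1{n-1}$ on $\sigma\inf\spec|\tilde\D|$.

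Next I would convert~\eqref{lower} into the volume bound. By~\eqref{lower}, $\Omega(1/2)=\{|\psi|<1/2\}$ is contained in $K\cup\{x\in M\setminus K:|x|\le r_1\}$, so using $|\psi|<1/2$ on $\Omega(1/4)\subset\Omega(1/2)$ crudely (or $16|\psi|^2<1$ there) gives, exactly as in the displayed computation preceding the theorem,
\[
V\!\left(\tfrac12\right)-V\!\left(\tfrac14\right)\;\le\;\int_K|\psi|^2\,d\mu_M\;+\;\mu\big(\{x\in M\setminus K:|x|\le r_1\}\big).
\]
The first term is bounded by $\int_K\|\psi\|^2\,dx\le c(n)(\rho+1)^n/(\sigma^2\inf\spec(\tilde\D^2))$ from Theorem~\ref{thmweighted} (the $\delta\psi$ part of that estimate is discarded, being nonnegative). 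The second term is bounded by the Euclidean volume of a ball of radius $r_1$, namely $c(n)r_1^n=c(n)\sigma^n(\sigma\inf\spec|\tilde\D|)^{-n/(n-1)}$. Combining, and using that in the exact Schwarzschild case $\rho$ and $m$ are linked (one may normalize $\sigma$ and absorb powers of $\sigma$, $\inf\spec|\tilde\D|\ge$ const, etc.), both contributions are $\le C$ with $C=c(n)\,(\rho+m^{1/(n-2)})^n/(\sigma^2(\inf\spec|\tilde\D|)^2)$ after enlarging the constant; the precise bookkeeping here produces the factor $(\rho+m^{1/(n-2)})^{n/2}/(\sigma\inf\spec|\tilde\D|)$ once one takes the square root in Theorem~\ref{thmg1}.

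Finally I would invoke Theorem~\ref{thmg1} with this $C$ and $t_1-t_0=1/4$: it yields an open set $\Omega\subset M$ (namely $\Omega=\Omega(t)$ for a suitable $t\in[1/4,1/2]$ chosen via the mean value theorem and Sard's lemma so that $\partial\Omega$ is a hypersurface) with $\mu_{n-1}(\partial\Omega)\le\sqrt m\,c_0(n)\cdot 4\sqrt C$, which is~\eqref{surf} after substituting $C$ and renaming the constant, and with the stated $\int_{M\setminus\Omega}|R|^2$-estimate verbatim (the constants $c_1(n,t_0),c_2(n,t_0)$ become $c_1(n),c_2(n)$). I expect the main obstacle to be Step~1: making the passage from the weighted $L^2$-bound of Theorem~\ref{thmweighted} to the pointwise bound~\eqref{lower} quantitative, i.e.\ proving that the elliptic estimates in the spherical cap produce constants with exactly the $\sigma$-dependence needed for the exponent $-\tfrac1{n-1}$, and handling the degeneration near the pole $\mathfrak n$ carefully enough that the bound is uniform for $|x|>r_1$ rather than only asymptotically. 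The remaining steps are essentially bookkeeping of constants and a direct appeal to the already-established Theorems~\ref{thmg1} and~\ref{thmweighted}.
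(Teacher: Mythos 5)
Your proposal follows essentially the same route as the paper's own argument in Section~\ref{secce2}: establish the pointwise lower bound~\eqref{lower} via elliptic estimates in the spherical cap combined with Theorem~\ref{thmweighted}, convert it into the volume bound~\eqref{volume} using the chain of inequalities displayed in that section (together with the Euclidean volume of $B_{r_1}$ and the estimate of $\int_K|\psi|^2$ from Theorem~\ref{thmweighted}), and then invoke Theorem~\ref{thmg1} with $t_0=1/4$, $t_1=1/2$. The only small slip is in your parenthetical ``using $|\psi|<1/2$ on $\Omega(1/4)$ crudely (or $16|\psi|^2<1$ there)'' --- the relevant inequality is $16|\psi|^2\ge1$ on $\Omega(1/2)\setminus\Omega(1/4)=\{1/4\le|\psi|<1/2\}$, which is what makes the first step $V(1/2)-V(1/4)\le16\int_{\Omega(1/2)}|\psi|^2$ work --- but since you explicitly defer to the paper's displayed computation, this does not affect the correctness of the outline.
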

Note that this theorem involves the surface area of the exceptional set~\eqref{surf}.
The geometry of $K$ enters the estimate only via the smallest eigenvalue
of the Dirac operator on~$\bar{M}$. This is a weaker and apparently more practicable condition than
working with the isoperimetric constant, in particular because eigenvalue estimates 
can be obtained with spectral methods for the Dirac operator on a compact
manifold (see for example~\cite{ginoux}).

\subsection{Results in the Setting with Second Fundamental Form} \label{secsff}
We now return to the setting of general relativity. Thus we again let~$(N^4, \bar{g})$ be
a Lorentzian manifold and~$(M^3, g, h)$ a spacelike hypersurface with induced
Riemannian metric~$g$ and second fundamental form~$h$. Now asymptotic flatness
involves in addition to~\eqref{gdecay} decay assumptions for the second fundamental form,
\[ (\Phi_* h)_{ij} = \O(r^{-2}) \;,\qquad
\partial_k (\Phi_* h)_{ij} = \O(r^{-3}) \;. \]
Total energy and momentum are defined by
\begin{align*}
E &= \frac{1}{16 \pi} \lim_{R \to \infty} \sum_{i,j=1}^3
\int_{S_R} (\partial_j (\Phi_* g)_{ij} - \partial_i (\Phi_* g)_{jj}) \:d\Omega^i \\
P_k &= \frac{1}{8 \pi} \lim_{R
\to \infty} \sum_{i=1}^3\int_{S_R} ((\Phi_* h)_{ki} - \sum_{j=1}^3
\delta_{ki} \:(\Phi_* h)_{jj}) \:d\Omega^i \:.
\end{align*}
The spinor proof of the positive mass theorem as outlined in~\eqref{LW}--\eqref{ibp}
works similarly in the case with second fundamental form, if~${\mathcal{D}}$ is
replaced by the so-called {\em{hypersurface Dirac operator}}, which uses the spin
connection~$\bar{\nabla}$ of the ambient space-time~$N^4$, but acts only in directions
tangential to the hypersurface~$M^3$ (see~\cite{witten, parker+taubes}). The
Lichnerowicz-Weitzenb\"ock formula becomes
\[ {\mathcal{D}}^2 = \bar{\nabla}^*_i \bar{\nabla}^i + {\mathfrak{R}}\:, \]
where now the dominant energy condition ensures that~${\mathfrak{R}}$ is a positive semi-definite
multiplication operator on the spinors. The existence of a solution of the hypersurface Dirac equation
with constant boundary values in the asymptotic end is proved in~\cite{parker+taubes}.
The integration-by-parts argument~\eqref{div} gives in analogy to~\eqref{npsi} the inequality
\[ \int_M |\psi|^2 d\mu_M \leq 4 \pi \left( E + \langle \psi_0, P \cdots \psi_0 \rangle \right) \]
Choosing~$\psi_0$ appropriately, one gets the {\em{positive energy theorem}} $E-|P| \geq 0$.

We now outline the method for deriving curvature estimates (for details see~\cite{kraus}).
As the space-time dimension is larger than three, 
we again need to work with the spinor operator.
Then one can derive an identity similar to~\eqref{nes}, but additional terms involving~$h$
arise. Moreover, $|R|^2$ is to be replaced by the norm of all components of the Riemann tensor
which are determined by the Gauss-Codazzi equations,
\[ |\bar{R}_M|^2 = \sum\limits^3_{i,j=1}\sum\limits^3_{\alpha,\beta=0}(\bar{R}_{ij\alpha\beta})^2 \]
(where the sums run over orthonormal or pseudo-orthonormal frames).
The presence of the second fundamental form leads to the
difficulty that the function $|\psi|^2$ is no longer subharmonic,
making it impossible to estimate the norm of the spinor with the
maximum principle. In order get around this difficulty, we first
construct a barrier function $F$, which is a solution of a
suitable Poisson equation. We then derive Sobolev estimates for
$F$, and these finally give us control of $\||\psi|^2-1\|_{L^6(M)}$.
This leads to the following result (see~\cite[Theorem~1.3]{kraus}).

\begin{Thm}
We choose $L \geq 3$ such that
\[ (L^\alpha -1)^2 \;\geq\; C\: \frac{4\pi E+\|h\|_2}{k^2\: (k+24\: \|h\|_3)^2} \: \||h|^2 + |\nabla h|\|_3 \]
where
\[ \alpha \;=\; \left( 1 + 24\: \frac{\|h\|_3}{k} \right)^{-1} . \;\]
Then there are numerical constants~$c_1, \ldots, c_4$ and
a set $\Omega \subset M$ with measure bounded by
\[ \mu(\Omega) \;\leq\; c_1\: \frac{L^6}{k^2} \:(4\pi E + \|h\|^2_2) \]
such that on $M \setminus \Omega$ the following inequality holds,
\begin{eqnarray*}
\lefteqn{ \int_{M \setminus \Omega} |\overline{R}_M|^2\:d\mu_M \;\leq\;
c_2\: \sup_{M}\left( |h|+ (|R|+|h|^2+|\overline{\nabla}h| \right) E } \\
&&+c_3\:L \:\sup_{M} \left( |\overline{\nabla}\overline{R}_M|+|h||\overline{R}_M| \right) \sqrt{E} \\
&&+c_4\:\frac{\sqrt{L+1}}{k}
\: \sqrt{\||h|^2+|\nabla h|\|_{6/5}} \; \left\||\overline{\nabla}\overline{R}_M|+|h||\overline{R}_M|
\right\|_{5/12} \;\sqrt{E}\;.
\end{eqnarray*}
\end{Thm}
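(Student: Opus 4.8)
The plan is to follow the strategy developed for the time-symmetric case in Sections~\ref{secL2}--\ref{secce1}, carrying along at every step the extra terms produced by the second fundamental form. Since the relevant dimension exceeds three, I would first form the spinor operator from a family $\psi_1,\dots,\psi_N$ of hypersurface Witten spinors whose boundary values form an orthonormal basis of the spinors at infinity (just as in dimension $n>3$ in Section~\ref{secce1}), and reproduce the divergence computation that led to~\eqref{L2est}: starting from $\nabla_i\langle\nabla_j\psi,\nabla^i\nabla^j\psi\rangle$, commuting $\nabla^j$ to the left, and inserting the hypersurface Lichnerowicz--Weitzenb\"ock formula ${\mathcal{D}}^2=\overline{\nabla}^*_i\overline{\nabla}^i+{\mathfrak{R}}$ into the third-derivative term. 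Because $\overline{\nabla}$ is the ambient spin connection and ${\mathfrak{R}}$ is built from the ambient Ricci curvature and $h$, both the commutators and the potential term bring in the full collection of components $\overline{R}_{ij\alpha\beta}$ controlled by Gauss--Codazzi, that is $|\overline{R}_M|$, together with $h$ and $\overline{\nabla}h$. Integrating over $M$ (no boundary terms appear, by the faster decay of the higher derivatives of $\psi$) and using the energy--momentum inequality $\int_M|\overline{\nabla}\psi|^2\,d\mu_M\leq 4\pi(E+\langle\psi_0,P\cdots\psi_0\rangle)$ in place of~\eqref{npsi}, I obtain an $h$-corrected version of~\eqref{L2est}. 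Combining it with the Schwarz estimate $\langle[\nabla_i,\nabla_j]\psi,[\nabla_i,\nabla_j]\psi\rangle\leq 4\,|\nabla^2\psi|^2$ and the Clifford-multiplication trick for the whole family---which give, in place of~\eqref{es1}, an inequality $\sum_i|\overline{R}_M|^2\,|\psi_i|^2\leq c(n)\sum_i|\nabla^2\psi_i|^2+(\text{$h$-terms})$---yields an $h$-corrected version of~\eqref{nes} in which $|R|^2$ is replaced by $|\overline{R}_M|^2$.

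The main obstacle is then the lower bound for the norm of the spinor operator on a large region. In the non-time-symmetric situation $|\psi|^2$ is no longer subharmonic (the $h$-dependent terms in the Bochner formula for $\Delta|\psi|^2$ are indefinite), so the maximum-principle argument~\eqref{subharmonic}--\eqref{es2} is unavailable. To get around this I would construct a barrier function $F$ as the solution of a suitable Poisson equation on $M$, with right-hand side controlled by $|h|^2+|\nabla h|$ and with appropriate decay at infinity, so that $1-|\psi|^2\leq F$ by the maximum principle. Deriving Sobolev estimates for $F$ as in Section~\ref{secce1} (the isoperimetric constant $k$ entering precisely here) then yields control of $\||\psi|^2-1\|_{L^6(M)}$, of the shape $\||\psi|^2-1\|_{L^6(M)}^6\leq c\,L^6\,k^{-2}\,(4\pi E+\|h\|^2_2)$. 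The auxiliary exponent $\alpha=(1+24\,\|h\|_3/k)^{-1}$---which equals $1$ when $h$ vanishes and degrades as $\|h\|_3/k$ grows---together with the threshold condition imposed on $L$ is exactly what is needed for this step to close and for the resulting lower bound $|\psi|^2\geq\frac{1}{2}$ to hold outside a set of controlled size.

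With this $L^6$-control of $|\psi|^2-1$ the rest is bookkeeping. I would set $\Omega=\{x\in M:|\psi(x)|^2<\frac{1}{2}\}$ (or the corresponding set for the spinor operator), so that Chebyshev's inequality applied to the $L^6$-bound gives $\mu(\Omega)\leq c_1\,L^6\,k^{-2}\,(4\pi E+\|h\|^2_2)$, the asserted measure estimate, while on $M\setminus\Omega$ the spinor norm---hence the coefficient $\sum_i|\psi_i|^2$ in the $h$-corrected version of~\eqref{nes}---is bounded below by a numerical constant (and $|\psi|$ stays bounded above, as needed). Substituting these bounds into the $h$-corrected inequality and grouping terms by their homogeneity in $E$ then isolates $\int_{M\setminus\Omega}|\overline{R}_M|^2\,d\mu_M$ on the left and produces the three advertised contributions: a $\sup_M$ term linear in $E$ with coefficient $\sup_M(|h|+|R|+|h|^2+|\overline{\nabla}h|)$, an $L$-weighted $\sqrt{E}$ term carrying $\sup_M(|\overline{\nabla}\overline{R}_M|+|h||\overline{R}_M|)$, and a H\"older-type term with coefficient $\sqrt{L+1}/k$ pairing $\sqrt{\||h|^2+|\nabla h|\|_{6/5}}$ with $\||\overline{\nabla}\overline{R}_M|+|h||\overline{R}_M|\|_{5/12}$ and a factor $\sqrt{E}$, for suitable numerical constants $c_2,c_3,c_4$.
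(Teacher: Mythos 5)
Your proposal follows exactly the strategy sketched in the paper (and carried out in detail in \cite{kraus}): form the spinor operator, derive the $h$-corrected analogue of~\eqref{nes} with $|\bar{R}_M|^2$ in place of $|R|^2$, replace the lost subharmonicity of $|\psi|^2$ by a barrier function $F$ solving a Poisson equation, obtain Sobolev/$L^6$ control of $|\psi|^2-1$, and then use Chebyshev to cut out the exceptional set $\Omega$. This matches the paper's own proof outline in substance and in order.
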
 \noindent
This theorem is the analog of Theorem~\ref{thm1} for a spacelike hypersurface~$(M^3, g, h)$
of a Lorentzian manifold~$N^4$. Unfortunately, the second fundamental form enters the
theorem in a rather complicated way. It is conceivable that the theorem could be simplified
by improving our method of proof.

The results described in Sections~\ref{seclevel}--\ref{secce2} in the Riemannian setting
so far have not been worked
out in the setting with second fundamental form. Many results could be extended.
However, with the present methods, the proofs and the statements of the results
would be rather involved.

\subsection{Outlook} \label{secoutlook}
We now give a brief outlook on open problems and outline possible directions for future research.
The following problems seem interesting and promising; they have not yet been
studied by us only due to other obligations.
\begin{itemize}
\item Explore the {\em{convexity}} of~$F$: In~\cite[Section~1]{level},
it was shown and briefly discussed that the functional~$F$, \eqref{Fdef}, is convex. However,
the geometric meaning of this convexity has not yet been analyzed. It also seems promising
to search for potential applications.
\item {\em{Extend the weighted $L^2$-estimates}} to more general asymptotically flat
manifolds: The weighted $L^2$-estimates of the Witten spinor~\cite{weighted} were worked out under
the assumption that the manifold is asymptotically Schwarzschild. As a consequence, we
could arrange that the point compactification of the asymptotic end was isometric to a spherical
cap (see Figure~\ref{fig1}), simplifying the elliptic estimates considerably
(see~\cite[Section~5]{weighted}). However, our methods also seem to apply 
to more general asymptotically flat manifolds, possibly with more general compactifications.
\end{itemize}
Moreover, it seems a challenging problem to extend the results outlined in
Sections~\ref{seclevel}--\ref{secce2} to the {\em{setting with second fundamental form}}.
As mentioned at the end of Section~\ref{secsff}, the main difficulty is to improve
our methods so as to obtain simple and clean results.

Our long-term goal is to study the limiting behavior of the manifold as total energy and momentum
tend to zero. Thus, stating the problem for simplicity in the Riemannian setting, we consider a
sequence~$(M_\ell, g_\ell)$ of asymptotically flat manifolds with~$m_\ell \searrow 0$.
In order to get better control of the global geometry, one could make the further assumptions that
the manifolds are all asymptotically Schwarzschild and that the Dirac operators
on the conformal compactifications satisfy the uniform spectral bound
\[ \inf \spec |{\mathcal{D}}_\ell| \geq \varepsilon \qquad \text{for all~$\ell$}\:. \]
Then one could hope that after cutting out exceptional sets~$\Omega_\ell$ of
small surface area~\eqref{surf}, the manifolds~$M_\ell \setminus \Omega_\ell$
converge to flat~$\R^n$ in a suitable sense, for example in a Gromov-Hausdorff sense.
In our attempts to prove results in this direction, we faced the difficulty that convergence
can be established only in suitable charts. Thus on~$M_\ell \setminus \Omega_\ell$
one would like to choose suitable canonical charts, in which the metrics~$g^\ell_{ij}$ converge to the
flat metric~$\delta_{ij}$. Unfortunately, the chart~\eqref{gdecay} is defined only in the asymptotic
end, and thus it would be necessary to extend this chart to~$M_\ell \setminus \Omega_\ell$.
As an alternative, one could hope that the vector fields associated to the Witten spinors~$\psi_i$
form a suitable frame of the tangent bundle. However, it seems difficult to get global control
of this frame. As another alternative, we tried to construct orthonormal
frames~$(e_i)$ by minimizing a corresponding Dirichlet energy,
\[ \int_M \sum_{i=1}^n |\nabla e_i|^2 d\mu_M \rightarrow \text{min} \:. \]
Unfortunately, it seems difficult to rule out that the corresponding minimizer has singularities.
These difficulties were our main obstacle for making substantial progress towards a
proof of Gromov-Hausdorff convergence. But once the problem of choosing a canonical chart
is settled, the limiting behavior of sequences of asymptotically flat manifolds could be attacked.


\section{Minkowski Embeddability of Hypersurfaces in Flat Space-Times} \label{SecTwo}

The positive mass theorem makes two statements on the energy $E$ and the momentum $P$ (see Section \ref{secsff} above) of an asymptotically flat spacelike hypersurface $M$ of a Lorentzian manifold $(\bar{M},\bar{g})$ which satisfies the dominant energy condition (see Section \ref{secdomenergy} below) at every point of $M$. The first statement is that the inequality $E\geq\abs{P}$ holds. The second statement is that if $E=\abs{P}$ holds, then the Riemann tensor of $\bar{g}$ vanishes at every point of $M$. This latter ``rigidity statement'' has been proved by Parker--Taubes \cite{parker+taubes} in the case when $M$ admits a spin structure --- and under the assumption that $M$ is $3$-dimensional, but the argument generalizes to higher dimensions. (The original proof of Witten \cite{witten} deduced the rigidity statement from the stronger assumption that $(\bar{M},\bar{g})$ satisfies the dominant energy condition on a neighborhood of $M$.)

\smallskip
Another proof of the rigidity statement was given by Schoen--Yau \cite{schoen+yau2}, without the spin assumption, but only in the case $\dim M\leq7$. However, Schoen--Yau proved more than Parker--Taubes: they showed that if $E=\abs{P}$ holds, then the Riemannian $n$-manifold $M$ with its given second fundamental form can be embedded isometrically into Minkowski space-time $\R^{n,1}=\R^n\times\R$ as the graph of a function $\R^n\to\R$; in particular, $M$ is diffeomorphic to $\R^n$.

\smallskip
It is natural to ask whether one can decouple the proof of embeddability into Minkowski space-time from the proof of the rigidity statement. That is, when we already know (e.g.\ from the Parker--Taubes proof) that $\bar{g}$ is flat along $M$, can we deduce in a simple way that $M$ with its second fundamental form admits an embedding of the desired form and is in particular diffeomorphic to $\R^n$?

\smallskip
This is indeed possible. The proof works in all dimensions and without topological (e.g.\ spin) conditions. Moreover, it generalises directly to the embeddability of asymptotically hyperbolic hypersurfaces into anti-de Sitter space-time in the rigidity case. This situation is considered in the work of Maerten \cite{Maerten2006}, to which we refer for the definition of the rigidity case in that context. Like Parker--Taubes in the asymptotically flat case, Maerten makes a spin assumption. His proof allows him to obtain an embedding into anti-de Sitter space-time via an explicit construction. Our argument below works differently, without any topological condition.

\smallskip
Stated with minimal assumptions, our result is the following \cite{Nardmannrigidity}: For $c\leq0$, let $\M^{n,1}_c$ denote Minkowski space-time if $c=0$, and anti-de Sitter space-time of curvature $c$ if $c<0$. In each case, $\M^{n,1}_c$ has the form $(\R\times\R^n,-dt^2+g_t)$, where $(g_t)_{t\in\R}$ is a family of Riemannian metrics on $\R^n$. Let $\pr\colon \M^{n,1}_c\to\R^n$ denote the projection $(t,x)\mapsto x$.

\begin{Thm} \label{rigid}
Let $n\geq3$ and $c\in\R_{\leq0}$, let $M$ be a connected $n$-manifold which contains a compact $n$-dimensional submanifold-with-boundary $C$ such that $M\without C$ has a connected component which is simply connected and not relatively compact in $M$. Let $(M,g,K)$ be a complete Riemannian manifold with second fundamental form which satisfies the Gauss and Codazzi equations for constant curvature $c$. Then:
\begin{enumerate}
\item
$(M,g,K)$ admits an isometric embedding $f$ into $\M^{n,1}_c$ such that $\pr\compose f\colon M\to\R^n$ is a diffeomorphism.
\item
When $\tilde{f}$ is an isometric immersion of $(M,g,K)$ into $\M^{n,1}_c$, then there exists an isometry $A\colon\M^{n,1}_c\to\M^{n,1}_c$ with $\tilde{f} = A\compose f$; in particular, $\tilde{f}$ is an embedding.
\end{enumerate}
\end{Thm}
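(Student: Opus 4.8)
The plan is to use the Lorentzian fundamental theorem of hypersurface theory (B\"ar--Gauduchon--Moroianu), which for a simply connected $M$ asserts that a pair $(g,K)$ satisfying the Gauss and Codazzi equations for the constant-curvature model $\M^{n,1}_c$ determines an isometric immersion $M\to\M^{n,1}_c$, unique up to an ambient isometry. Since $M$ itself need not be simply connected, I would first pass to the universal cover $\pi\colon\hat M\to M$, pull back $g$ and $K$, and apply B\"ar--Gauduchon--Moroianu there to obtain an isometric immersion $\hat f\colon\hat M\to\M^{n,1}_c$. The first real task is to push this down to $M$: because the immersion is unique up to ambient isometry, each deck transformation $\gamma$ of $\hat M$ yields an isometry $A_\gamma$ of $\M^{n,1}_c$ with $\hat f\compose\gamma=A_\gamma\compose\hat f$, so $\gamma\mapsto A_\gamma$ is a homomorphism $\pi_1(M)\to\mathrm{Isom}(\M^{n,1}_c)$. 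I would show this representation is trivial, so that $\hat f$ descends to an immersion $f\colon M\to\M^{n,1}_c$; the hypothesis that $M\without C$ has a simply connected, non-relatively-compact end is what forces triviality, as explained below.

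Next I would analyze $\pr\compose f\colon M\to\R^n$. Completeness of $(M,g)$ plus the form $-dt^2+g_t$ of the model metric should give that $\pr\compose f$ is a \emph{proper} local diffeomorphism: a curve escaping every compact set in $M$ has infinite $g$-length, hence its image has infinite length in $\R^n$ for the spacelike metric $g_t$, so escapes every compact set in $\R^n$ (here one uses that the $g_t$ are uniformly comparable on compacta of $\R\times\R^n$ along the image, which follows once one controls the $t$-coordinate; the graph property has to be extracted, not assumed). A proper local diffeomorphism between connected manifolds of the same dimension is a covering map, and since $\R^n$ is simply connected, $\pr\compose f$ is a diffeomorphism. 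This simultaneously gives that $f$ is an embedding (it is injective because $\pr\compose f$ is) and that $M\cong\R^n$, proving part~(1). For part~(2), any isometric immersion $\tilde f$ of $(M,g,K)$ lifts to $\hat M$, where B\"ar--Gauduchon--Moroianu uniqueness gives an ambient isometry $A$ with $\widetilde{\tilde f}=A\compose\hat f$; this $A$ is automatically compatible with the (trivial) deck action, so it descends to $\tilde f=A\compose f$, and then $\tilde f$ is an embedding because $f$ is.

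The main obstacle is the topological step: showing the holonomy representation $\pi_1(M)\to\mathrm{Isom}(\M^{n,1}_c)$ is trivial, and more precisely that $f$ is a diffeomorphism onto $\R^n$ rather than merely a covering of finite or infinite degree. The leverage is the assumed end: in $M\without C$ there is a connected component $U$ which is simply connected and not relatively compact. Restricting $\hat f$ to a lift of $U$ — which maps isomorphically, since $U$ is simply connected — and using that an incomplete but unbounded spacelike hypersurface-germ in a constant-curvature Lorentzian model can be matched, by the rigidity in B\"ar--Gauduchon--Moroianu, with a standard slice-like piece of $\M^{n,1}_c$, one identifies the asymptotic behavior of $f$ on $U$; propriety then propagates inward. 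Concretely I expect to argue that $\pr\compose f$ restricted to $U$ is already a diffeomorphism onto the complement of a compact set in $\R^n$, which pins down the degree of the covering $\pr\compose f$ to be $1$ and simultaneously kills the deck representation. Verifying that this end-matching really works for all $c\leq0$ in a uniform way — in particular handling the anti-de Sitter case, where the $g_t$ degenerate or blow up as $|t|$ grows — is where the careful work lies; everything else is either the cited theorem or standard covering-space and completeness arguments.
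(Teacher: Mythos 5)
Your overall strategy is the right one — pull back to the universal cover, apply B\"ar--Gauduchon--Moroianu, and then exploit the hypothesis on the end $U$ — and this matches the paper's outline. But you have re-ordered the steps in a way that creates two genuine gaps, both of which the paper's proof is specifically designed to avoid.

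First, you propose to prove \emph{directly} that $\pr\compose f$ is a proper local diffeomorphism, hence a covering map. The paper explicitly remarks that it is difficult to show directly that $\pr\compose f$ is a covering map, and instead introduces the weaker notion of a \emph{quasicovering}: a local embedding $\phi\colon M\to\R^n$ with the property that any partial lift $\tilde\gamma\colon[0,1[\to M$ of a path $\gamma\colon[0,1]\to\R^n$ extends to $[0,1]$. One then checks (Proposition \ref{rigidityprp}) that a quasicovering of the simply connected target $\R^n$ is a diffeomorphism, by the same monodromy argument one uses for coverings. Verifying the quasicovering extension property requires only that $f$ be \emph{long} (finite $h$-length of $f\compose w$ forces finite $g$-length of $w$) plus completeness of $(M,g)$ — a pointwise/path-wise argument that does not need any uniform comparison of the metrics $g_t$. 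Your properness argument, by contrast, needs exactly such a uniform comparison; as you yourself note, the $g_t$ in the anti-de Sitter model blow up as $|t|$ grows, so a bounded $\pr$-image does not obviously bound $t$ or the $g$-length of a preimage path. This is a real gap, not a technicality.

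Second, you try to kill the deck representation $\pi_1(M)\to\mathrm{Isom}(\M^{n,1}_c)$ \emph{before} knowing anything about the global topology, by an ``end-matching'' argument that you acknowledge you have not carried out. The paper does this in the opposite order: apply Proposition \ref{rigidityprp} on the universal cover $\tilde M$ to conclude $\tilde M\cong\R^n$, and \emph{then} observe that the hypothesis on $U$ — simply connected, not relatively compact, contained in the complement of a compact $C$ — forces the covering $p\colon\tilde M\to M$ to be one-sheeted, since otherwise $p^{-1}(U)$ would contribute several disjoint unbounded pieces to $\R^n\setminus p^{-1}(C)$, contradicting that $\R^n$ ($n\geq3$) has one end. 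Once you know $\tilde M\cong\R^n$, this is an elementary count; trying to establish triviality of the holonomy beforehand, as you do, re-creates most of the difficulty you were hoping to outsource to B\"ar--Gauduchon--Moroianu. I would recommend adopting the paper's ordering and the quasicovering device; your version of the argument cannot be completed as written.
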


In this theorem, the second fundamental form $K$ is allowed to be a field of symmetric bilinear forms on $M$ with values in an arbitrary (not necessarily trivial) normal bundle of rank $1$. That is, we do not \emph{assume} the normal bundle to be trivial, we get its triviality as a \emph{conclusion} of the theorem (because every line bundle over $\R^n$ is trivial). To understand this, consider the manifold $M=S^1\times\R^{n-1}$ and the flat Riemannian metric $g$ on $M$. It admits an isometric embedding into the flat Lorentzian manifold $\mathfrak{M}\times\R^{n-1}$, where $\mathfrak{M}$ is the Möbius strip, regarded a line bundle over $S^1$ with timelike fibers. The second fundamental form $K$ of this embedding vanishes identically, but the normal bundle is not trivial. $(M,g,K)$ is not a counterexample to Theorem \ref{rigid} because there is no compact $C\subseteq M$ such that $M\without C$ has a simply connected not relatively compact connected component. Replacing $\mathfrak{M}$ by the trivial line bundle over $S^1$ shows that the simply-connectedness assumption in Theorem \ref{rigid} is also needed when the normal bundle is trivial.

\medskip
Let us sketch the proof of Theorem \ref{rigid}. Much of the necessary work is already contained in the semi-Riemannian version of the fundamental theorem of hypersurface theory due to Bär--Gauduchon--Moroianu \cite[Section 7]{BGM}:
\begin{Thm}[Bär--Gauduchon--Moroianu] \label{bgm}
Let $c\in\R$, let $(M,g,K)$ be a Riemannian manifold with second fundamental form which satisfies the Gauss and Codazzi equations for constant curvature $c$. Assume that $M$ is simply connected. Then $(M,g,K)$ admits an isometric immersion into $\M^{n,1}_c$. When $f_0,f_1$ are isometric immersions of $(M,g,K)$ into $\M^{n,1}_c$, then there exists an isometry $A\colon\M^{n,1}_c\to\M^{n,1}_c$ with $f_1 = A\compose f_0$.
\end{Thm}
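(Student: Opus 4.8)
The plan is the classical strategy behind fundamental theorems of submanifold theory, adapted to a Lorentzian target: encode the pair $(g,K)$ as a connection on an auxiliary bundle over $M$, recognize the Gauss and Codazzi equations as the statement that this connection has precisely the curvature of the model space $\M^{n,1}_c$, and then integrate, using that $M$ is simply connected.

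First I would set up the auxiliary object. Let $L$ be the real line bundle in which $K$ takes its values (take $L=\underline{\R}$ if none is prescribed), and form $E:=TM\oplus L$ with the bundle metric $\langle\cdot,\cdot\rangle_E$ that restricts to $g$ on $TM$, is negative definite on $L$, and makes $TM\perp L$; fibrewise this has signature $(n,1)$, matching $T\M^{n,1}_c$ along a spacelike hypersurface. As $M$ is simply connected, $L$ is trivial and orientable and carries a global unit section. On $E$ define a metric connection $\hat\nabla$ by combining the Levi-Civita connection of $g$ on $TM$, the Weingarten endomorphism of $TM$ determined by $K$ and $g$, the exterior derivative in the $L$-direction, and terms proportional to $c$ --- the template being the way $\nabla^{\mathrm{amb}}$ on $\M^{n,1}_c$ decomposes along a hypersurface through the Gauss and Weingarten formulas. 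A direct computation then shows that the curvature of $\hat\nabla$ equals $c$ times the constant-curvature model tensor built from $\langle\cdot,\cdot\rangle_E$ \emph{exactly when} $(g,K)$ satisfies the Gauss equation (the $TM\otimes TM$ part) and the Codazzi equation (the mixed part). This identification --- pinning down the connection and the constants while tracking the Lorentzian signature --- is the computational heart of the argument.

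Next, integrate. For $c=0$ the connection $\hat\nabla$ is flat, so over the simply connected $M$ the bundle $(E,\hat\nabla)$ is trivial: there is a $\hat\nabla$-parallel bundle isometry $\Psi\colon E\to M\times\R^{n,1}$. The $\R^{n,1}$-valued one-form $\beta:=\Psi|_{TM}$ is closed, because $\hat\nabla$ restricted to $TM$ is torsion-free ($\nabla$ has no torsion and $K$ is symmetric), so $\beta=df$ for some $f\colon M\to\R^{n,1}$; one checks that $f$ is an isometric immersion whose normal bundle is $\Psi(L)\cong L$ and whose second fundamental form, computed from $\Psi$ and the $L$-component of $\hat\nabla$, is $K$. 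For $c\neq0$ one enlarges $E$ by one more trivial line (with the sign dictated by the flat space in which $\M^{n,1}_c$ sits as a quadric), extends $\hat\nabla$ by $c$-terms to a flat connection on the enlarged bundle, trivializes it the same way, and obtains a parallel map $F\colon M\to(\text{flat space})$ which automatically satisfies $\langle F,F\rangle\equiv 1/c$, hence takes values in $\M^{n,1}_c$ and restricts there to the desired isometric immersion.

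For uniqueness, suppose $f_0,f_1$ are isometric immersions of $(M,g,K)$. Along each $f_i$, the Gauss and Weingarten formulas identify $f_i^*T\M^{n,1}_c$, together with its induced connection, isometrically and parallelly with $(E,\hat\nabla)$. Fixing $p\in M$ and using that the isometry group of $\M^{n,1}_c$ acts simply transitively on points furnished with a compatible orthonormal frame, there is a unique isometry $A\colon\M^{n,1}_c\to\M^{n,1}_c$ with $A(f_0(p))=f_1(p)$ whose differential at $f_0(p)$ carries the frame $(df_0,\nu_0)$ to the frame $(df_1,\nu_1)$. The set of points where $A\compose f_0$ and $f_1$ agree to first order is nonempty, closed, and open --- open because both maps satisfy the same first-order system determined by $(g,K)$, so first-order agreement propagates by uniqueness for ordinary differential equations. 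Since $M$ is connected this set is all of $M$, giving $f_1=A\compose f_0$. Thus the only genuinely laborious step is the curvature computation of the first part (equivalently, verifying the structure equations) together with, for $c\neq 0$, the correct choice of the extra flat dimension; granted the connection $\hat\nabla$, both the integration and the uniqueness argument are routine.
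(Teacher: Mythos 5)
The survey paper does not actually prove Theorem~\ref{bgm}; it quotes it and refers the reader to Section~7 of B\"ar--Gauduchon--Moroianu \cite{BGM}. So there is no in-paper proof against which to measure your argument, and the proper question is simply whether your sketch is sound. It is: what you describe is the classical bundle-theoretic proof of the fundamental theorem of hypersurface theory, transported to the Lorentzian target with the sign bookkeeping done correctly. The construction of $E=TM\oplus L$ with signature $(n,1)$, the identification of Gauss and Codazzi with flatness (respectively constant curvature) of $\hat\nabla$ --- noting, as you implicitly do, that in codimension one there is no Ricci equation to check --- the use of simple connectivity to trivialize both $L$ and the flat bundle, the closedness of the tautological $1$-form from torsion-freeness and symmetry of $K$, the embedding into the flat quadric model for $c\ne 0$, and the uniqueness via the simply transitive action of the isometry group on adapted frames together with ODE uniqueness along curves: all of this is standard and correct. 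The one place where you gloss is the same place you flag yourself, namely writing out the connection $\hat\nabla$ explicitly and verifying the curvature identity; that is a routine but necessary computation.

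For comparison with what [BGM] actually does: their framework is built around \emph{generalized cylinders} $(I\times M,\,-dt^2+g_t)$, in which the metric is evolved in the $t$-direction by data tied to the second fundamental form, and the Gauss--Codazzi system appears as the condition making the cylinder metric have constant curvature; the immersion is then produced by mapping the cylinder into the model space. Your parallel-frame / development argument and their cylinder argument are two standard routes to the same destination. Yours avoids introducing an auxiliary ambient $(n+1)$-manifold and works directly with a flat vector bundle over $M$, which keeps the linear algebra cleaner; the cylinder picture in [BGM] is better adapted to their spinorial applications (evolution of Dirac operators along the foliation) and makes the normal exponential map explicit, but either is a complete proof of Theorem~\ref{bgm}.
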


Recall that a map $f\colon M\to N$ to a Lorentzian manifold $(N,h)$ is \emph{spacelike} iff for every $x\in M$ the image of $T_xf \colon T_xM\to T_{f(x)}N$ is spacelike. A spacelike map $f\colon(M,g)\to(N,h)$ from a Riemannian manifold to a Lorentzian manifold is \emph{long} iff for every interval $I\subseteq\R$ and every smooth path $w\colon I\to M$, the $g$-length of $w$ is finite if the $h$-length of $f\compose w$ is finite. For example, every spacelike isometric immersion is long. The second ingredient for the proof of Theorem \ref{rigid} is the following fact:

\begin{Prp} \label{rigidityprp}
Let $(M,g)$ be a nonempty connected complete Riemannian $n$-mani\-fold, let $f\colon (M,g)\to\M^{n,1}_c$ be a spacelike long immersion into Minkowski space. Then $f\colon M\to\M^{n,1}_c$ is a smooth embedding, and $\pr\compose f\colon M\to\R^n$ is a diffeomorphism.
\end{Prp}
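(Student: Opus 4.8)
The plan is to reduce the whole proposition to the single assertion that $p:=\pr\compose f\colon M\to\R^n$ is a diffeomorphism; the embedding statement for $f$ then follows by a formal argument. First I would check that $p$ is a local diffeomorphism. Since $f$ takes values in Minkowski space we may put $c=0$, so that in the description $\M^{n,1}_c=(\R\times\R^n,-dt^2+g_t)$ the family $g_t$ is the $t$-independent Euclidean metric $g_{\mathrm{eucl}}$ on $\R^n$, and the kernel of $T\pr$ at every point of $\M^{n,1}_c$ is the timelike line $\R\,\partial_t$. As $f$ is spacelike, the image of $T_xf$ is an $h$-positive definite $n$-plane (where $h$ denotes the Minkowski metric), hence meets $\R\,\partial_t$ only in $0$; since it has dimension $n=\dim\R^n$, the composite $T_xp=T\pr\compose T_xf$ is a linear isomorphism. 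Therefore $\hat g:=p^{*}g_{\mathrm{eucl}}$ is a genuine Riemannian metric on $M$, and $p\colon(M,\hat g)\to(\R^n,g_{\mathrm{eucl}})$ is a local isometry by construction.

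The key step is to show that $(M,\hat g)$ is complete; this is where the hypotheses ``$f$ long'' and ``$(M,g)$ complete'' come in, and it is the step I expect to be the main obstacle. I would argue along paths. For every $v\in T_xM$ one has $0\le h(T_xf\,v,T_xf\,v)\le\hat g(v,v)$: the lower bound because $f$ is spacelike, the upper bound because the $\R^n$-component of $T_xf\,v$ is precisely $T_xp\,v$ while the $\partial_t$-component only subtracts from the norm. Hence for any smooth path $w\colon[0,b)\to M$ the $h$-length of $f\compose w$ is at most the $\hat g$-length of $w$. Consequently, if $w$ has finite $\hat g$-length, then $f\compose w$ has finite $h$-length, so by the definition of ``long'' the path $w$ has finite $g$-length, hence is $g$-Cauchy as $s\to b$, hence converges in $M$ by completeness of $(M,g)$. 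Thus every $\hat g$-rectifiable path defined on a half-open interval converges at its missing endpoint, and this property is equivalent to metric completeness of $(M,\hat g)$ (a standard equivalence for length metrics: a Cauchy sequence can be joined by a path of finite total length).

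It then remains to turn completeness into the diffeomorphism property. Because $p\colon(M,\hat g)\to(\R^n,g_{\mathrm{eucl}})$ is a local isometry whose source is complete and connected, the standard theorem that such a map is a smooth covering onto its (connected) target applies; since $\R^n$ is simply connected and $M$ is connected, $p$ is a one-sheeted covering, i.e.\ a bijective local diffeomorphism, hence a diffeomorphism. Finally, writing $f=(f^{t},p)$ in the splitting $\M^{n,1}_c=\R\times\R^n$ and setting $\phi:=f^{t}\compose p^{-1}\colon\R^n\to\R$, one has $f=\bigl(y\mapsto(\phi(y),y)\bigr)\compose p$, the composition of the diffeomorphism $p$ with the embedding of $\R^n$ onto the graph of the smooth function $\phi$; hence $f$ is a smooth embedding (with image that graph) and $\pr\compose f=p$ is a diffeomorphism, as claimed.

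The reason the second paragraph is the real point is that the natural first attempt --- comparing $\hat g$ with $g$ pointwise --- is hopeless: $f$ is not assumed isometric, so $\hat g$ and $f^{*}h$, let alone $g$, are unrelated bilinear forms, and the only bridge between the Lorentzian geometry downstairs and the given complete metric $g$ is the length-theoretic condition ``long''. Feeding it the purely algebraic inequality $h(T_xf\,v,T_xf\,v)\le\hat g(v,v)$ coming from spacelikeness is what makes the completeness of $(M,\hat g)$ --- and with it the proposition --- drop out.
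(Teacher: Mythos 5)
Your proof is correct, and it takes a route that is formally different from the paper's, though the two arguments share the same key mechanism. The paper introduces an ad hoc notion of \emph{quasicovering} (a local embedding $\phi\colon M\to\R^n$ with a path-lifting completion property), shows by a covering-space-style argument that every quasicovering onto $\R^n$ is a diffeomorphism, and verifies directly that $\pr\compose f$ is a quasicovering; the verification uses exactly the inequality you isolate --- that the Lorentzian length of $f\compose\tilde\gamma$ is bounded above by the Euclidean length of $\pr\compose f\compose\tilde\gamma$ --- combined with ``long'' and completeness of $(M,g)$. You instead pull back the Euclidean metric to $\hat g=(\pr\compose f)^*g_{\mathrm{eucl}}$, feed the same inequality and the same path argument into a proof that $(M,\hat g)$ is complete, and then invoke the classical theorem that a local isometry from a complete connected Riemannian manifold onto a connected base is a Riemannian covering, hence a diffeomorphism onto $\R^n$. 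Your packaging is the more ``textbook'' one (it reduces to a named theorem at the price of the intermediate completeness lemma and the Cauchy-sequence-to-finite-length-path equivalence), while the paper's is more self-contained and avoids the covering theorem at the price of introducing and proving facts about quasicoverings. Your graph-decomposition argument for the final embedding statement ($f=(y\mapsto(\phi(y),y))\compose p$) is also a clean alternative to the paper's ``proper injective immersion'' observation. Both are fine; essentially you and the paper differ only in where you draw the line between ad hoc argument and cited standard fact.
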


The idea of the proof of \ref{rigidityprp} is as follows. Let us call a map $\phi\colon M\to\R^n$ a \emph{quasicovering} iff it is a local embedding and for all paths $\gamma\colon [0,1]\to\R^n$ and $\tilde{\gamma}\colon[0,1[\to M$ with $\phi\compose\tilde{\gamma} = \gamma| [0,1[$, there exists an extension of $\tilde{\gamma}$ to a path $[0,1]\to M$. One can show that every quasicovering $\phi\colon M\to\R^n$ is a diffeomorphism. This is done in the same way in which one proves the well-known fact that every covering map $M\to\R^n$ is a diffeomorphism (because $\R^n$ is simply connected and $M$ is nonempty and connected).

\smallskip
Now one verifies that $\pr\compose f$ is a quasicovering: It is an immersion (thus a local embedding) because $f$ is a spacelike immersion. For the extension property of a quasicovering, one notes that $f\compose\tilde{\gamma}$ has finite length because $\pr\compose f\compose\tilde{\gamma} = \gamma|[0,1[$ has finite length. Since $f$ is long, $\tilde{\gamma}$ has finite length. Completeness implies that $\tilde{\gamma}$ can be extended to $[0,1]$. Thus $\pr\compose f$ is a quasicovering. (In contrast, it is difficult to show directly that $\pr\compose f$ is a covering map.)

\smallskip
Hence $\pr\compose f$ is a diffeomorphism. Since every proper injective immersion is an embedding, so is $f$. This completes the proof of Proposition \ref{rigidityprp}. (Cf.\ \cite{Nardmannrigidity} for details.)

\medskip
Theorem \ref{rigid} is now easy to prove: We pull back $g$ and $K$ by the universal covering map $p\colon\tilde{M}\to M$ and apply Theorem \ref{bgm}. To the resulting spacelike isometric immersion $\tilde{M}\to\M^{n,1}_c$ we apply Proposition \ref{rigidityprp}. This shows that $\tilde{M}$ is diffeomorphic to $\R^n$. For a connected component $U$ of $M\without C$ as in the statement of Theorem \ref{rigid}, a simple topological argument shows that the covering $p|\,{p^{-1}(U)}\colon p^{-1}(U)\to U$ has only one sheet. Thus $p$ is a diffeomorphism. Now all statements of Theorem \ref{rigid} follow immediately.


\section{Spacelike Foliations and the Dominant Energy Condition} \label{SecThree}

\subsection{Pseudo-Riemannian Manifolds without Spacelike Foliations}

When Lo\-rentzian manifolds are considered in general relativity, it is often assumed that they have nice causality properties like stable causality or even global hyperbolicity. Such manifolds admit a smooth real-valued function with timelike gradient \cite{BernalSanchez2005} and thus a spacelike foliation of codimension $1$, by the level sets of the function. Let us call spacelike foliations of codimension $1$ on a Lorentzian manifold \emph{space foliations} for simplicity. A few years ago, Christian Bär asked us whether \emph{every} Lorentzian manifold admits a space foliation.

\smallskip
The answer is not obvious, for the following reasons. First, clearly every point in a Lorentzian manifold has an open neighborhood which admits a space foliation.

\smallskip
Second, the tangent bundle of every semi-Riemannian manifold has an orthogonal decomposition $V\oplus H$ into a timelike sub vector bundle $V$ and a spacelike sub vector bundle $H$. (At every point of an $n$-dimensional manifold $M$ which is equipped with a semi-Riemannian metric of index $q$, the choice of a time/space splitting corresponds to a point in the contractible space $\OO(n)/(\OO(q)\times\OO(n-q))$. Thus a global time/space splitting of the tangent bundle $TM$ exists if a certain fiber bundle over $M$ with contractible fibers admits a smooth section. Obstruction theory tells us that such a section exists for every manifold and metric.)

\smallskip
The question is therefore whether the spacelike bundle $H$ can always be chosen \emph{integrable}, i.e.\ tangent to a foliation. Since \emph{every} sub vector bundle of rank $1$ of a tangent bundle is integrable, it is clear that every $2$-dimensional Lorentzian manifold admits a space foliation. (There are many quite complicated examples of Lorentzian $2$-manifolds, because every noncompact connected smooth $2$-manifold admits a Lorentz\-ian metric.)

\smallskip
Third, a theorem of W.~Thurston says that every connected component of the space of $(n-1)$-plane distributions on an $n$-manifold $M$ contains an integrable distribution \cite{Thurston1976}. Here we use the word \emph{distribution} in the differential-topological sense: a \emph{$k$-plane distribution} on a manifold $M$ is a sub vector bundle of rank $k$ of $TM$. Distributions can be viewed as sections in the bundle $\Gr_k(TM)\to M$ whose fiber over $x$ is the Grassmann manifold $\Gr_k(T_xM)$ of $k$-dimensional sub vector spaces of $T_xM$. Connected components of the set of $k$-plane distributions on $M$ are considered with respect to the compact-open topology on the space of sections in $\Gr_k(TM)\to M$. In contrast to the situation for Riemannian metrics, the space of Lorentzian metrics on a given manifold can be empty or have several connected components.

\smallskip
Thurston's theorem implies that every connected component of the space of Lo\-rentz\-ian metrics on a manifold contains metrics which admit space foliations: The set of connected components of the space of $(n-q)$-plane distributions on an $n$-manifold $M$ is in canonical bijective correspondence to the set of connected components of the space of semi-Riemannian metrics of index $q$ on $M$. The correspondence maps the connected component of each distribution $H$ to the connected component of a metric which makes $H$ spacelike.

\smallskip
These facts show that there are no \emph{topological} obstructions to the existence of space foliations on Lorentzian manifolds (in contrast to the situation on semi-Riemannian manifolds of higher index: the analog of Thurston's theorem is in general false for distributions of codimension $\geq2$). Nevertheless, the answer to Bär's question is negative. Counterexamples exist even on topologically trivial manifolds like $\R^n$ (see \cite[Theorem 0.1]{Nardmann2007}):

\begin{Thm} \label{foliationless}
Let $(M,g)$ be an $n$-dimensional pseudo-Riemannian manifold of index $q\in\set{1,\dots,n-2}$ (e.g.\ a Lorentzian manifold of dimension $n\geq3$). Let $A\neq M$ be a closed subset of $M$. Then there exists a metric $g'$ of index $q$ on $M$ such that
\begin{enumerate}
\item
$g=g'$ on $A$;
\item
every $g$-timelike vector in $TM$ is $g'$-timelike;
\item
$M\without A$ does not admit any codimension-$q$ foliation none of whose tangent vectors is $g'$-timelike; in particular, $(M,g')$ does not admit any space foliation.
\end{enumerate}
\end{Thm}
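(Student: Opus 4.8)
As noted above, Thurston's theorem \cite{Thurston1976} rules out any homotopy-theoretic obstruction: every homotopy class of codimension-$q$ distributions on $M$ contains an integrable representative, and the metric $g'$ we shall build will in fact \emph{admit} many nowhere-timelike codimension-$q$ distributions --- just no integrable one. So the proof must exploit the quantitative rigidity of the condition ``no tangent vector is $g'$-timelike'', not its homotopy type. First I would localise. Since $A$ is closed and $A\neq M$, fix $p\in M\without A$ and a relatively compact coordinate ball $U\ni p$ with $\overline U\cap A=\emptyset$. It then suffices to modify $g$ only inside $U$, keeping $g'=g$ near $\partial U$ (so that $g'$, set equal to $g$ outside $U$, is smooth on $M$), keeping the index equal to $q$, enlarging the $g'$-timelike cone at every point of $U$, and arranging that $U$ carries \emph{no} rank-$(n-q)$ distribution that is at once integrable and nowhere $g'$-timelike. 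Indeed, a codimension-$q$ foliation of $M\without A$ with no $g'$-timelike tangent vector restricts on $U$ to exactly such a distribution, and a space foliation of $(M,g')$ is a foliation of $M\without A$ of that kind; the conditions $g=g'$ on $A$ and ``$g$-timelike $\Rightarrow$ $g'$-timelike'' hold by construction. The hypothesis $q\leq n-2$ enters crucially here: the would-be leaves have dimension $n-q\geq2$, and for rank $1$ integrability is automatic, so for $q=n-1$ the theorem is false.

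\textbf{The twisted cone field.} Inside $U$, fix: an auxiliary Riemannian metric $h$; a metric $\hat g$ that equals $g$ near $\partial U$ and whose timelike cone strictly contains that of $g$ at each interior point (obtained, e.g., by rescaling the negative-definite part of $g$); and a smooth field $R$ of $h$-orthogonal automorphisms of the tangent bundle with $R=\mathrm{id}$ near $\partial U$. Put $g'_x:=(R_x)^*\hat g_x$ for $x\in U$, and $g':=g$ on $M\without U$. Then $g'$ is smooth on $M$, has index $q$, and its timelike cone at $x\in U$ is the image under $R_x^{-1}$ of that of $\hat g$. One chooses $R$ subject to two requirements. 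First, the rotation at each point is small enough that the $g'$-timelike cone still contains the $g$-timelike cone --- possible because the $\hat g$-cone strictly contains it. Second --- the delicate point --- the cone field is \emph{twisted}: along a suitable family of short closed curves filling a subregion $\Omega\subseteq U$, every rank-$(n-q)$ plane field that avoids the timelike cone is forced to undergo a definite nonzero net rotation as one runs once around the curve. The intuition is that enlarging the timelike cone confines the admissible planes and then turning the cone drags those planes with it; making the second requirement precise, while still respecting the first and $R=\mathrm{id}$ near $\partial U$, is a substantial part of the construction.

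\textbf{The rigidity lemma --- the main obstacle.} The heart of the matter is to prove that no nowhere-$g'$-timelike rank-$(n-q)$ distribution $H$ on $U$ is integrable. Argue by contradiction: if $H$ were integrable, pick a leaf $L$ meeting $\Omega$ and examine how its tangent planes $T_yL$ vary. On one hand, the twisting forces the ``generalised angle'' of $T_yL$ relative to a fixed reference plane to change by a definite amount as $y$ runs once around one of the closed curves in $\Omega$, since $T_yL$ must stay out of the turning timelike cone. On the other hand, integrability severely limits how fast the tangent plane of an integral leaf can turn: the Frobenius bracket condition, together with bounds on the structure functions of a local frame adapted to $H$, prevents a leaf of a \emph{genuine} foliation from acquiring such an extra rotation along a short closed curve. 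These two facts are incompatible once the twist is taken large enough relative to the size of the curves, which can be arranged while keeping $g'$ smooth, of index $q$, and equal to $g$ near $\partial U$. Carrying out this quantitative balance --- the explicit construction of $R$ realising the twist, the sharp form of the Frobenius estimate, and the bookkeeping that closes the contradiction --- is where essentially all the work lies. It is also where a naive argument breaks down: a loop that is \emph{merely} transverse to a codimension-$q$ foliation carries no obstruction at all (witness the transverse circles in a Reeb component), so the argument cannot be topological and must track the geometry of $g'$ quantitatively. With the rigidity lemma in hand, the localisation step finishes the proof; see \cite{Nardmann2007} for the details.
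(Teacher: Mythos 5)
Your localisation is fine and you have correctly spotted the two ingredients that drive the construction: the hypothesis $q\leq n-2$ so that one can make a spacelike plane field nonintegrable, and the fact that widening the lightcones confines any admissible plane field to a thin region. But the step you yourself flag as the ``heart of the matter'' --- the rigidity lemma --- is the place where your approach genuinely diverges from the paper's, and as sketched it does not work.

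You propose to show directly that a single ``sufficiently twisted'' metric $g'$ excludes integrable spacelike distributions, by arguing that the twist forces the tangent planes of a putative leaf to rotate faster than the Frobenius condition permits, ``together with bounds on the structure functions of a local frame adapted to $H$.'' The difficulty is that no such bounds are available: the structure functions belong to the unknown foliation, not to the metric, and integrability places no quantitative restriction on how fast the tangent planes of leaves may turn along a loop (your own Reeb example is already a witness to arbitrarily fast turning). You notice the tension but do not resolve it, and I do not see how to resolve it at that level of generality. The construction also has an unresolved internal tension: the rotation field $R$ must be small (so that every $g$-timelike vector stays $g'$-timelike) and at the same time large (so that the cone is ``twisted enough''), and nothing in the argument pins down when ``enough'' has been reached.

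The paper sidesteps all of this with a compactness argument that needs no a priori control on the foliation. One fixes a \emph{specific} nonintegrable $g$-spacelike plane field $H'$ on the ball, and builds a sequence $(g_k)$, equal to $g$ on $A$, whose lightcones widen so that the $g_k$-spacelike region squeezes down to $H'$ (so condition (2) is automatic, since the cones only grow). If each $g_k$ admitted a $g_k$-spacelike integrable distribution $H_k$, then by the squeezing $H_k\to H'$ in $C^0$. The key lemma is then that \emph{$C^0$-limits of integrable plane fields are integrable} --- a genuine theorem proved in \cite{Nardmann2007} (with a variant for codimension one going back to \cite{Varela1976}) --- which forces $H'$ to be integrable, a contradiction. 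So some $g_k$ serves as $g'$. The crucial advantage of this route is that the closedness of integrability under $C^0$ limits holds without any bound on derivatives of the approximating foliations, exactly the bound your version of the rigidity lemma would require. To repair your proposal you would either have to supply such a bound --- which I believe does not exist --- or replace your single-metric angle-counting argument by the sequence-plus-$C^0$-limit argument of the paper.
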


Here and in the following, our conventions are such that $v\in TM$ is $g$-spacelike resp.\ $g$-timelike resp.\ $g$-causal iff $g(v,v)>0$ resp.\ $g(v,v)<0$ resp.\ $g(v,v)\leq0$; such that the index of a metric is the maximal dimension of timelike sub vector spaces of tangent spaces; and such that Lorentzian metrics have index $1$.

\smallskip
The idea of the proof of Theorem \ref{foliationless} is simple: We choose a $g$-spacelike $(n-q)$-plane distribution $H$ on $M$ and modify it on $M\without A$ in such a way that the new distribution $H'$ is not integrable on $M\without A$ but still $g$-spacelike; this is possible because $2\leq n-q\leq n-1$. We construct a sequence $(g_k)_{k\in\N}$ of semi-Riemannian metrics of index $q$ on $M$ with $g_0=g$ such that each $g_k$ is equal to $g$ on $A$; and such that on some compact ball $B$ in $M\without A$, the $g_k$-lightcones become wider and wider as $k$ tends to $\infty$, and the $g_k$-spacelike regions ``converge'' to $H'$ as they become smaller with increasing $k$ (cf.\ Figure \ref{prooffigure}).

\smallskip
We claim that for sufficiently large $k$, the restriction of $g_k$ to $B$ does not admit a space foliation. Otherwise we would obtain a sequence $(H_k)_{k\in\N}$ of integrable distributions on $B$ such that every $H_k$ is $g_k$-spacelike. By our construction of the metrics $g_k$, this sequence would converge in the $C^0$-topology to the nonintegrable distribution $H'$. But $C^0$-limits of integrable distributions are always integrable; cf.\ \cite{Nardmann2007} for details (or \cite{Varela1976} for a slightly different proof sketch in the case $n-q=n-1$). This contradiction proves our claim. Now the proof of Theorem \ref{foliationless} is complete: we can take $g'=g_k$ for any sufficiently large $k$.

\begin{figure}[hbtp]
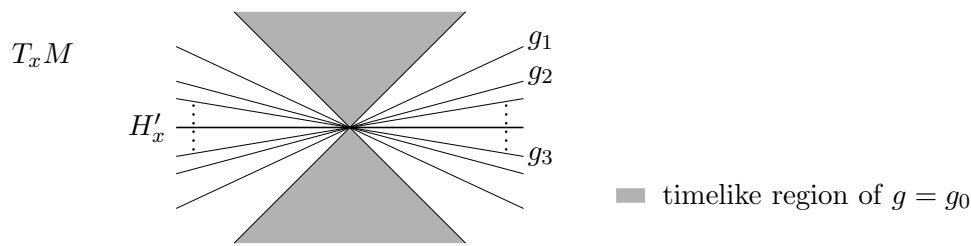

\vspace*{6ex}
\[
\psset{unit=2em,nodesep=2em,linewidth=0.2pt,origin={0,0},dotsep=2em}
\psset{fillcolor=grey}
\pspolygon*[linecolor=grey](-2.5,0.0)(-4.5,-2.0)(-0.5,-2.0)
\pspolygon*[linecolor=grey](-2.5,0.0)(-4.5,2.0)(-0.5,2.0)
\psframe*[linecolor=grey](2.1,-1.3)(2.6,-1.05)
\psset{fillcolor=grey}
\rput[l](2.9,-1.19){\Rnode{s}{\text{timelike region of $g=g_0$}}}
\psline(-5.5,1.4)(0.5,-1.4)
\psline(-5.5,-1.4)(0.5,1.4)
\psline(-5.5,0.8)(0.5,-0.8)
\psline(-5.5,-0.8)(0.5,0.8)
\psline(-5.5,0.5)(0.5,-0.5)
\psline(-5.5,-0.5)(0.5,0.5)
\psline[linewidth=0.6pt](-5.5,0.0)(0.5,0.0)
\psline(-4.5,-2.0)(-0.5,2.0)
\psline(-4.5,2.0)(-0.5,-2.0)
\psline[linestyle=dotted,dotsep=0.1,linewidth=1pt](0.2,0.05)(0.2,0.4)
\psline[linestyle=dotted,dotsep=0.1,linewidth=1pt](0.2,-0.05)(0.2,-0.4)
\psline[linestyle=dotted,dotsep=0.1,linewidth=1pt](-5.2,0.05)(-5.2,0.4)
\psline[linestyle=dotted,dotsep=0.1,linewidth=1pt](-5.2,-0.05)(-5.2,-0.4)
\rput(0.8,1.5){\Rnode{g1}{g_1}}
\rput(0.8,0.9){\Rnode{g1}{g_2}}
\rput(0.8,-0.5){\Rnode{g1}{g_3}}
\rput(-6.0,0.0){\Rnode{H}{H'_x}}
\rput(-7.8,1.25){\Rnode{T}{T_xM}}
\]
\vspace*{6ex}
\caption{The lightcones at a point $x\in B$ of the metrics $g_k$ in the proof of Theorem \ref{foliationless}.} \label{prooffigure}
\end{figure}

\subsection{Existence of Dominant Energy Metrics} \label{secdomenergy}

One might be tempted to regard Lorentzian metrics without space foliations as objects of little physical relevance, things which could only serve as examples of the strange phenomena that occur when the standard causality assumptions in general relativity are dropped. But there is another side of the medal: certain physically \emph{desirable} properties --- which from the geometric view\-point are conditions on the Ricci curvature --- can in general be satisfied \emph{only} by Lorentzian metrics without space foliations.

\smallskip
This holds in particular for the dominant energy condition, which plays an important role in the positive energy theorem (cf.\ sections \ref{SecOne} and \ref{SecTwo} above). In the following discussion we will use the version with arbitrary cosmological constant:

\begin{Def}
Let $(M,g)$ be a Lorentzian manifold, let $\Lambda\in\R$. The \emph{energy-momentum tensor of $(M,g)$ with respect to (the cosmological constant) $\Lambda$} is the symmetric $(0,2)$-tensor $T= \Ric -\frac{1}{2}\scal g +\Lambda g$; here $\scal$ and $\Ric$ are the scalar and Ricci curvatures of $g$, respectively. (This means that we interpret Einstein's field equation as the definition of the energy-momentum tensor when $g$ and $\Lambda$ are given.)

\smallskip
\emph{$(M,g)$ satisfies the dominant energy condition with respect to $\Lambda$} iff for every $x\in M$ and every $g$-timelike vector $v\in T_xM$, the vector $-T^a{}_bv^b$ lies in the closure of the connected component of $\set{u\in T_xM \suchthat g(u,u)<0}$ which contains $v$. (The abstract index notation $-T^a{}_bv^b$ describes the vector which is the $g$-dual of the linear form $T(.,v)$ on $T_xM$.)

\smallskip
In other words, $(M,g)$ satisfies the dominant energy condition with respect to $\Lambda$ iff every $g$-timelike vector $v\in TM$ satisfies $T(v,v)\geq0$ and $g(w,w)\leq0$, where $w^a = -T^a{}_bv^b$.
\end{Def}

In general relativity, every physically reasonable space-time metric $g$ should satisfy the dominant energy condition: Timelike vectors $v$ are tangents to observer wordlines. Every observer should see a nonnegative energy density at each space-time point she passes through; that is expressed by the condition $T(v,v)\geq0$. And she should see that matter does not move faster than light; that is what $g(w,w)\leq0$ means ($w$ is the momentum density observed by $v$).

\medskip
In view of the physical importance of the dominant energy condition, a natural geometric question arises: \emph{For given cosmological constant $\Lambda$, which manifolds $M$ admit a Lorentzian metric that satisfies the dominant energy condition with respect to $\Lambda$?} A trivially necessary condition is that $M$ admits a Lorentzian metric at all, but are there other conditions? (Note that every noncompact connected manifold admits a Lorentzian metric.)

\smallskip
Riemannian geometry offers many nonexistence results for metrics of nonnegative scalar curvature, most notably the positive energy theorem and obstructions to Riemannian metrics of nonnegative scalar curvature on closed manifolds. On every spacelike hypersurface $S$ in a Lorentzian manifold which satisfies the dominant energy condition for some $\Lambda$, the Gauss equation yields an inequality $s\geq\ldots$, where $s$ is the scalar curvature of the induced Riemannian metric on $S$ and ``$\ldots$'' depends on $\Lambda$ and the second fundamental form of $S$.

\smallskip
One might therefore suspect that there exist obstructions to the existence of dominant energy metrics. For instance, when $M$ has the form $S^1\times N$ for some closed manifold $N$ which does not admit a Riemannian metric of nonnegative scalar curvature, then it is difficult to satisfy a $\Lambda\geq0$ dominant energy condition with a Lorentzian metric which makes every submanifold $N_t = \set{t}\times N$ spacelike: for every $t\in S^1$, properties of the second fundamental form of $N_t$ would have to compensate the negative scalar curvature of $N_t$ that occurs unavoidably on some subset of $N_t$.

\smallskip
However, one can construct dominant energy metrics in a way that circumvents such problems. In our $S^1\times N$ example, we can even arrange that the vector field $\partial_t$ along the $S^1$ factor becomes timelike (see \cite[Theorem 0.5]{Nardmann2007}):

\begin{Thm} \label{maindom}
Let $(M,g)$ be a connected Lorentzian manifold of dimension $n\geq4$, let $K$ be a compact subset of $M$, let $\Lambda\in\R$. If $n=4$, assume that $(M,g)$ is time- and space-orientable, and that either $M$ is noncompact, or compact with intersection form signature divisible by $4$. Then there exists a Lorentzian metric $g'$ on $M$ such that
\begin{enumerate}
\item
every $g$-causal vector in $TM$ is $g'$-timelike;
\item
$g'$ satisfies on $K$ the dominant energy condition with cosmological constant $\Lambda$;
\item
$(M,g')$ does not admit a space foliation.
\end{enumerate}
\end{Thm}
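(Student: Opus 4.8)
Let me sketch a proof of Theorem~\ref{maindom}. The plan is to combine the construction behind Theorem~\ref{foliationless}, which kills all space foliations, with a flexibility argument installing the dominant energy condition near~$K$; the two are carried out in essentially disjoint regions, and the dimension-four hypotheses enter only through the dominant-energy part.
\textbf{Step 1 (dominant energy near $K$).} Enlarge $K$ to a compact smooth domain, fix a closed neighbourhood $A$ with $K\subseteq\mathrm{int}(A)$, and aim for the \emph{strict} dominant energy condition on $A$, i.e.\ $T(v,v)>0$ and $g'(w,w)<0$ for every $g'$-timelike $v$, with $w^a=-T^a{}_bv^b$. This is an \emph{open} condition on the $2$-jet of the metric (the tensor $T=\Ric-\tfrac12\scal\,g+\Lambda g$ depends algebraically on that jet), and it is pointwise \emph{nonempty} for every $\Lambda$ and every $n\geq3$: the value of $\Ric$ at a point can be prescribed arbitrarily by choosing second derivatives of the metric suitably, hence so can $T$, and one takes $T$ strictly dominant (for instance $\mathrm{diag}(-\rho,p,\dots,p)$ in a $g$-orthonormal frame with $\rho>\abs{p}\geq0$). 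Using a tubular model around $K$ together with this openness and nonemptiness, one obtains a Lorentzian metric $g_1$ on $M$ which agrees with $g$ outside a compact set, makes every $g$-causal vector $g_1$-timelike, and satisfies the strict dominant energy condition on $A$.

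\textbf{Step 2 (destroying space foliations, relative to $A$).} Apply the argument of Theorem~\ref{foliationless} to $g_1$ and the closed set $A$: pick a $g_1$-spacelike $(n-1)$-plane distribution $H$ and a closed ball $B\subseteq M\without A$; since $n-1\geq2$, perturb $H$ on $B$ to a $g_1$-spacelike distribution $H'$ which is non-integrable on $B$; and build metrics $(g_k)_{k\in\N}$, each equal to $g_1$ outside $B$, with $g_1$-causal vectors $g_k$-timelike and with the $g_k$-lightcones on $B$ widening so that the $g_k$-spacelike regions converge to $H'$. If every $g_k$ admitted a space foliation, we would obtain integrable $g_k$-spacelike distributions on $B$ converging in $C^0$ to the non-integrable $H'$; but $C^0$-limits of integrable distributions are integrable, a contradiction. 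So $g':=g_k$ for large $k$ admits no space foliation; it equals $g_1$ on $A\supseteq K$, so it still satisfies the (strict, hence ordinary) dominant energy condition there with respect to $\Lambda$; and the cone inclusions compose to give property~(1).

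\textbf{Step 3 (dimension four).} The assumptions specific to $n=4$ are inherited from Step~1. Installing the dominant energy condition there ultimately rests on a bundle-theoretic ingredient which is available for free when $M$ is noncompact (the relevant degree-$4$ obstruction vanishes for dimension reasons) but which, for compact $M$, requires time- and space-orientability --- splitting $TM$ into an oriented line bundle and an oriented rank-$3$ bundle $H$ --- together with the vanishing of a characteristic number formed from $p_1(H)=p_1(M)$; by the signature theorem this number is a multiple of $\sigma(M)$, so divisibility of $\sigma(M)$ by $4$ is exactly the condition needed. With these hypotheses Steps~1 and~2 go through unchanged.

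\textbf{The main obstacle.} The crux is the tension between Steps~1 and~2: the dominant energy condition constrains the curvature, hence second derivatives of the metric, whereas the cone-widening of Step~2 pushes the metric towards degeneracy on $B$ and makes its curvature large. When $K\neq M$ one can keep $A$ and $B$ disjoint and the two requirements never meet --- one only has to check that fixing the metric on all of $A$ does not obstruct the non-integrability construction and the $C^0$-limit argument. The delicate case is $M$ compact with $K=M$: then there is no room to separate the regions, Steps~1 and~2 must be performed simultaneously, and one must verify that the degenerating metrics $g_k$ can be arranged so that the strict dominant energy condition still holds on $B$. This is plausible --- very wide lightcones leave only a thin cone of timelike test vectors $v$, which helps $T(v,v)>0$, while making many vectors causal, which helps $g'(w,w)\leq0$ --- but it calls for quantitative control of $\Ric(g_k)$ along that thin cone, and that estimate is, I expect, the technical heart of the theorem.
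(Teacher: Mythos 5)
Your proposal takes a genuinely different route from the paper, and unfortunately the differences are where the real difficulties live; the proposal does not close them.

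The paper's proof is a single unified construction rather than a two-step separation. One first finds, via Gromov's convex integration (or Thom transversality when $n\geq5$), a $g$-spacelike $(n-1)$-plane distribution $H$ that is \emph{twisted}, i.e.\ pointwise nonintegrable: $\Tw_H$ vanishes nowhere. One then defines $g'$ by the stretching \eqref{gprime} with a small positive \emph{constant} $f$. The same smallness of $f$ simultaneously yields all three conclusions: (1) the lightcones open so wide that every $g$-causal vector becomes $g'$-timelike; (3) the $g'$-spacelike cones shrink onto the nonintegrable $H$, whence the $C^0$-limit argument from Theorem~\ref{foliationless} rules out any space foliation; and (2) in the Ricci formula \eqref{Ricformula} the terms quadratic in $\Tw^H_g$ carry a coefficient $f^{-2}$, so wherever $\Tw^H_g\neq0$ --- which is everywhere since $H$ is twisted --- these terms dominate $\Ric_{g'}$ for small $f$ and force the dominant energy condition on the compact set $K$. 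The dominant energy condition thus falls out of the \emph{same} cone-widening that destroys the foliations; it is not installed separately.

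This makes several of your steps untenable. First, your Step~1 has no justification: the fact that a second-order relation is open and pointwise nonempty does not by itself give a global solution on a compact region; that is precisely the kind of statement that would require an h-principle argument for the relation on metrics, and you have not shown it is ample (the paper applies the h-principle to distributions, not to the curvature relation directly). Second, your Step~1 is internally inconsistent with property~(1): if $g_1$ agrees with $g$ outside a compact set, then $g$-lightlike vectors there are $g_1$-lightlike, not $g_1$-timelike, so property~(1) cannot hold for the final $g'$ either. Third, your Step~3 misattributes the dimension-four hypotheses: in the paper they are needed not for installing the dominant energy condition, but for the existence of a spacelike \emph{twisted} distribution in the given connected component --- the Hirzebruch--Hopf and Donaldson inputs concern exactly this. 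Your two-region scheme never constructs a twisted distribution, so there is no place for those hypotheses to enter, which is a sign the scheme is off-track. Finally, the tension you correctly isolate in your ``main obstacle'' paragraph (the case $K=M$ compact, where the two regions cannot be kept apart) is not a boundary case the paper has to fight; it is the generic situation, and your last sentences --- that very wide lightcones help the dominant energy inequalities and that this calls for quantitative control of $\Ric$ along the thin timelike cone --- are in fact pointing at the paper's mechanism without carrying it out.
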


Recall the definition of the intersection form signature of a closed oriented $4$-manifold $M$: On the de Rham cohomology $\R$-vector space $V=H^2_{\text{dR}}(M)$, we have a symmetric nondegenerate bilinear form $\omega\colon V\times V\to\R$ given by $\omega([\alpha],[\beta]) = \int_M\alpha\wedge\beta$. Diagonalization of $\omega$ yields a diagonal matrix with $p$ positive and $q$ negative entries. The intersection form signature of $M$ is $p-q$. When we reverse the orientation of $M$, then $\omega$ and the signature change their signs. Thus the condition that the signature be divisible by $4$ is well-defined for a closed orientable $4$-manifold. If a closed $4$-manifold admits a Lorentzian metric (this happens iff the Euler characteristic vanishes) and is orientable, then its signature is automatically even.

\smallskip
Theorem \ref{maindom} generalizes to dimension $3$ when one assumes that $M$ is orientable \cite[Theorem 8.6]{Nardmann2007}. However, property (1) must then be replaced by the weaker property that $g'$ lies in the same connected component of the space of Lorentzian metrics as $g$.

\medskip
Let us consider again the case where $M$ has the form $S^1\times N$ for some closed manifold $N$ which does not admit a Riemannian metric of nonnegative scalar curvature --- the connected sum of two $3$-tori, say, for which $S^1\times N$ is orientable and has intersection form signature $0$.
Then we can choose any Riemannian metric $g_N$ on $N$ and consider the product Lorentzian metric $g = -dt^2\oplus g_N$ on $M$. This $g$ does certainly not satisfy the dominant energy condition for any $\Lambda\geq0$. It is time- and space-orientable and makes the vector field $\partial_t$ timelike. For every $\Lambda$, Theorem \ref{maindom} applied to $K=M$ yields a metric $g'$ on $M$ which satisfies the dominant energy condition with respect to $\Lambda$ and still makes $\partial_t$ timelike.

\smallskip
We will sketch the proof of Theorem \ref{maindom} in a moment, but already at this point property (3) provides a hint how the difficulties arising from Riemannian nonnegative scalar curvature obstructions can be avoided: the foliation given by the leaves $N_t$ will not be spacelike for the metric $g'$. (Probably no compact hypersurface of $M$ will be $g'$-spacelike, but that is not obvious from the proof.)

\smallskip
For the proof of Theorem \ref{maindom}, we need a pointwise measure of the nonintegrability of a distribution \cite[p.~176]{Thurston}:
\begin{Def}
Let $H$ be a $k$-plane distribution on a manifold $M$. The \emph{twistedness $\Tw_H$ of $H$} is a $TM/H$-valued $2$-form on $M$ (i.e.\ a section in $\bigwedge^2(H^\ast)\otimes(TM/H)$) which is defined as follows. Let $[.,.]_x$ denote the Lie bracket of vector fields on $M$ evaluated in a point $x\in M$, and let $\pi\colon TM\to TM/H$ denote the projection. For $x\in M$ and $v,w\in H_x$, we define
\[
\Tw_H(v,w) = \pi([v,w]_x) \;\;,
\]
where on the right-hand side we have extended the vectors $v,w$ to sections in $H$. The definition does not depend on the extension, because it is antisymmetric and $C^\infty(M,\R)$-linear in $v$: $\pi(v)=0$ implies that $\pi([fv,w]) = \pi(f[v,w]-df(w)v) = f\pi([v,w])$ holds for every $f\in C^\infty(M,\R)$.
\end{Def}

By Frobenius' theorem, a distribution $H$ is integrable (i.e.\ tangent to a foliation) if and only if $\Tw_H$ vanishes identically. In contrast to the proof of Theorem \ref{foliationless} above, we need for the proof of Theorem \ref{maindom} distributions $H$ which are not just not integrable, but even pointwise nonintegrable in the sense that $\Tw_H$ does not vanish in any point $x\in M$; i.e., we need that for every $x\in M$, there exist sections $v,w$ in $H$ such that the Lie bracket value $[v,w]_x$ is not contained in $H_x$. Let us call a distribution with this property \emph{twisted}.

\smallskip
The first step in the proof is to choose for the given Lorentzian metric $g$ a spacelike twisted $(n-1)$-plane distribution $H$. In order to find such a distribution, we start with an arbitrary spacelike distribution and prove that it can be approximated in the fine $C^0$-topology by twisted distributions. (For the definition of the \emph{fine}, also known as \emph{Whitney}, \emph{$C^0$-topology}, see \cite[p.~35]{Hirsch}. Because Theorem \ref{maindom} arranges the dominant energy condition only on a compact set $K$, it would suffice here to prove that $H$ can be approximated \emph{on $K$} in the fine $C^0$-topology, which is the same as the compact-open topology because $K$ is compact. But we want to emphasize that this first step of the proof works also on noncompact manifolds.)

\smallskip
The proof of this approximation employs M.\ Gromov's h-principle for ample open partial differential relations, also known as the convex integration method; cf.\ \cite{GromovPDR} or \cite{Spring}. In dimensions $n\geq5$, one can even use R.\ Thom's jet transversality theorem \cite[Theorem 2.3.2]{EliashbergMishachev}, which shows that twisted distributions lie not only $C^0$-dense but even $C^\infty$-dense in the space of distributions. The situation in dimension $4$ is more subtle and requires the additional assumptions in Theorem \ref{maindom}. For instance, if the bundles $H$ and $TM/H$ are orientable and the intersection form signature of $M$ is congruent to $2$ modulo $4$, then the connected component of $H$ does not contain any twisted distribution, not even far away from $H$. Theorems of Hirzebruch--Hopf \cite{HirzebruchHopf} and Donaldson \cite{Donaldson1987} are applied to solve the problem when the manifold is noncompact or the signature is divisible by $4$. For details of these differential-topological considerations see Chapter 5 of the second author's PhD thesis \cite{Nardmann2004}.

\smallskip
Since every distribution which is sufficiently $C^0$-close to our spacelike start distribution is spacelike as well, we obtain a spacelike twisted distribution $H$, as desired. Let $V$ be the (timelike) $g$-orthogonal complement of $H$. For every $f\in C^\infty(M,\R_{>0})$, we can now consider the Lorentzian metric $g'$ on $M$ which is given by
\begin{equation} \label{gprime}
g'(v_0+h_0,v_1+h_1) = \tfrac{1}{f^2}g(v_0,v_1) +g(h_0,h_1)
\end{equation}
for all $x\in M$ and $h_0,h_1\in H_x$ and $v_0,v_1\in V_x$. Using similar arguments as in the proof of Theorem \ref{foliationless}, we see that there exists a constant $\varepsilon_0>0$ such that whenever $f\leq\varepsilon_0$, then the metric $g'$ has the properties (1) and (3) of Theorem \ref{maindom}.

\smallskip
Now we have to compute the Ricci tensor of $g'$ and check whether $g'$ satisfies the dominant energy condition. Let $x\in M$. If $(v,w)\in H_x\times H_x$, then
\begin{equation} \label{Ricformula} \begin{split}
\Ric_{g'}(v,w)
&= \Ric_g(v,w)
+\tfrac{1}{f}\Hess_g\!f\,(v,w)
-\tfrac{2}{f^2}df(v)df(w)\\
&\mspace{20mu}
+\tfrac{1}{f}\divergence^V_g(w)df(v)
+\tfrac{1}{f}\divergence^V_g(v)df(w)
+\tfrac{1+f^2}{2f}\Symst{H}{g}(v,w,a)df(a)\\
&\mspace{20mu}
-(1-f^2)\Phi^H_g(v,w)
-\tfrac{1-f^2}{2f^2}\Twist{H}{g}(v,a,b)\Twist{H}{g}(w,a,b) \;\;.
\end{split} \end{equation}
If $(v,w)\in V_x\times V_x$, then
\[ \begin{split}
f^2\Ric_{g'}(v,w)
&= f^2\Ric_g(v,w)
-f\Hess_g\!f(v,w)
+\Big(\tfrac{1}{f}\laplace^H_g\!f +f\laplace^V_g\!f\Big)g(v,w)\\
&\mspace{20mu}
-\tfrac{2}{f^2}\abs{df}_{g,H}^2g(v,w)
-f\divergence^H_g(w)df(v)
-f\divergence^H_g(v)df(w)\\
&\mspace{20mu}
+\Big(\tfrac{1}{f}\divergence^V_g(a)df(a) +f\divergence^H_g(a)df(a)\Big)g(v,w)
-\tfrac{1+f^2}{2f}\Symst{V}{g}(v,w,a)df(a)\\
&\mspace{20mu}
+(1-f^2)\Phi^V_g(v,w)
+\tfrac{(1-f^2)^2}{4f^2}\Twist{H}{g}(a,b,v)\Twist{H}{g}(a,b,w)
\;\;.
\end{split} \]
If $(v,w)\in V_x\times H_x$, then
\[ \begin{split}
f\Ric_{g'}(v,w)
&= f\Ric_g(v,w)
+\tfrac{3}{2f^2}\Twist{H}{g}(w,a,v)df(a)
-\divergence^H_g(v)df(w)
+\divergence^V_g(w)df(v)\\
&\mspace{20mu}
+\sigma^H_g(v,w,a)df(a)
+\Big(\tfrac{1-f^2}{2f}\tilde{\Theta}^H_g
-\tfrac{f(1-f^2)}{2}\tilde{\Theta}^V_g
+\tfrac{(1-f^2)^2}{4f}\tilde{\Xi}^H_g\Big)(v,w) \;\;.
\end{split} \]
In these formulas, $\divergence_g^?$ are certain $(0,1)$-tensors, $\Phi^?_g, \tilde{\Theta}^?_g, \tilde{\Xi}^H_g$ are $(0,2)$-tensors, and $\Sw^?_g,\sigma_g^H$ are $(0,3)$-tensors, induced by the metric $g$ and the distributions $V,H$. The precise definitions are not relevant for our discussion. $\Hess_gf$ is the $g$-Hessian of $f$, $\laplace_g^Uf$ is the $g$-contraction of the restriction of $\Hess_g$ to the subbundle $U$, and $\abs{df}^2_{g,H}$ is the $g$-contraction of the restriction of $df\otimes df$ to the subbundle $H$. Arguments $a,b$ occur always pairwise in the formulas; they have to be interpreted in the sense of a summation convention, i.e., a $g$-contraction is performed in these tensor indices.

\smallskip
Finally, $\Tw^H_g$ is the $(0,3)$-tensor on $M$ which is defined as follows: We identify $TM/H$ with $V$. Using the projection $\pi_H\colon TM=V\oplus H\to H$, we let
\[
\Tw^H_g(v,w,z) = g\big(\Tw_H(\pi_Hv,\pi_Hw),z\big) \;\;.
\]

\smallskip
Although the formula for $\Ric_{g'}$ is quite complicated, it is relatively easy to see what happens on a compact set $K\subseteq M$ when the function $f$ is a very \emph{small} positive \emph{constant}: All summands containing derivatives of $f$ are zero, and at every point $x\in K$ where $\Tw^H_g$ does not vanish, the summands containing $\Tw^H_g\otimes\Tw^H_g$ dominate the Ricci tensor because their coefficients have $f^2$ in the denominator. In this situation one can determine from the tensor field $\Tw^H_g$ alone whether $g'$ satisfies the dominant energy condition for a given $\Lambda$.

\smallskip
The result is that if $H$ is twisted, then for every $\Lambda$ and every compact subset $K$ of $M$ there exists a constant $c_{K,\Lambda}>0$ such that for every constant $f$ with $0<f<c_{K,\Lambda}$, the metric $g'$ satisfies on $K$ the dominant energy condition with respect to $\Lambda$. This completes the proof of Theorem \ref{maindom}.

\medskip
One can apply an analogous ``stretching'' by a function $f$ as in equation \eqref{gprime} to an arbitrary semi-Riemannian metric $g$ of index $q$ and an arbitrary $g$-spacelike $k$-plane distribution $H$. The effect is similar: For every comparison of $\Ric_{g'}$ with $g'$, the most important contribution to $\Ric_{g'}$ comes from the nonintegrability properties of $H$ when $f$ is a small constant. This principle can also be employed to prove new results in \emph{Riemannian} geometry \cite{Nardmannhprinciple}.

\subsection{Lorentz Cobordisms and Topology Change}

A striking example of the difference between metrics without space foliations and space-foliated metrics occurs in the classical problem of ``topology change'' in general relativity. This problem deals with the question whether the spatial topology of our universe could change as time goes by. It was discussed by several authors in the 1960s and 1970s; cf.\ e.g.\ \cite{Reinhart1963,Geroch1967,Yodzis1,Yodzis2,Tipler1977}. In order to describe it in detail, let us adopt the following terminology (see also Figure \ref{cobordism}):

\begin{Def}
Let $S_0,S_1$ be $(n-1)$-dimensional compact manifolds. A \emph{weak Lorentz cobordism between $S_0$ and $S_1$} is a compact $n$-dimensional Lorentzian man\-ifold-with-boundary $(M,g)$ whose boundary is the disjoint union $S_0\sqcup S_1$, such that $M$ admits a $g$-timelike vector field which is inward-directed on $S_0$ and outward-directed on $S_1$. A \emph{Lorentz cobordism between $S_0$ and $S_1$} is a weak Lorentz cobordism $(M,g)$ between $S_0$ and $S_1$ such that $\mfbd M$ is $g$-spacelike. $S_0$ is [\emph{weakly}] \emph{Lorentz cobordant} to $S_1$ iff there exists a [weak] Lorentz cobordism between $S_0$ and $S_1$.
\end{Def}

\begin{figure}[hbtp]
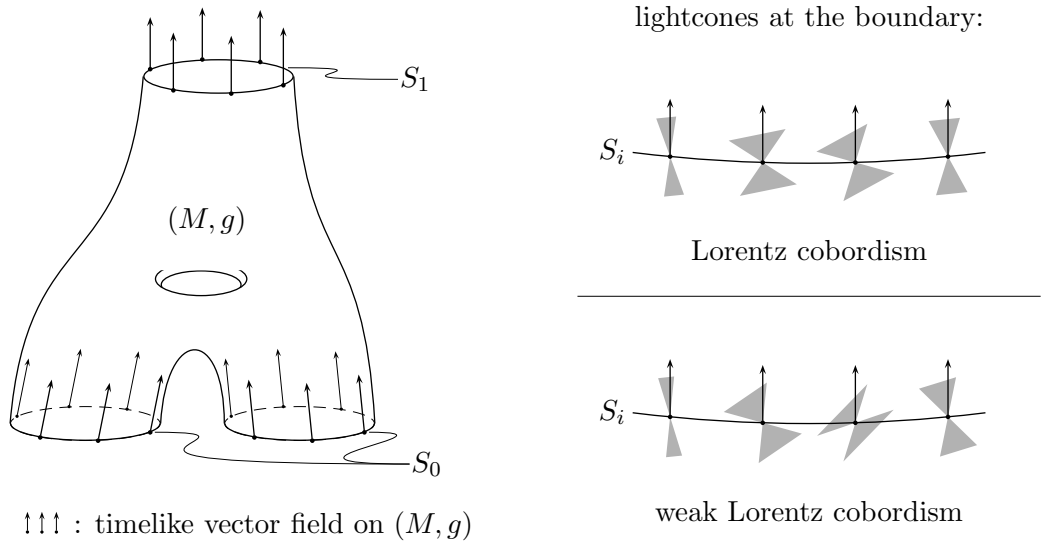

\vspace*{18ex}
\[
\psset{unit=2em,nodesep=2em,linewidth=0.3pt,origin={0,0},dotsep=2em,dotsize=.5pt 2.5}
\psellipse[linestyle=dashed,linecolor=green](-3.8,-3)(1.3,0.3)
\psellipse[linestyle=dashed,linecolor=green](-7.5,-3)(1.3,0.3)
\psellipse[linewidth=0.5pt,linecolor=green](-5.2,3)(1.3,0.3)
\psbezier[linewidth=0.5pt](-8.8,-3)(-8.5,0.5)(-6.8,-0.5)(-6.5,3)
\psbezier[linewidth=0.5pt](-2.5,-3)(-2.7,0.5)(-3.7,-0.5)(-3.9,3)
{\psclip{\pspolygon[linestyle=none](-9,-3)(-9,-6)(-2,-6)(-2,-3)}
\psellipse[linewidth=0.5pt,linecolor=green](-3.8,-3)(1.3,0.3)
\psellipse[linewidth=0.5pt,linecolor=green](-7.5,-3)(1.3,0.3)
\endpsclip}
\psbezier[linewidth=0.5pt](-6.2,-3)(-6.1,-1.3)(-5.1,-1.3)(-5.1,-3)
{\psclip{\pspolygon[linestyle=none](-6.5,-1)(-4,-1)(-4,-0.35)(-6.5,-0.35)}
{\psclip{\psellipse[linewidth=0.5pt](-5.5,-0.5)(0.8,0.3)}
\psellipse[linewidth=0.5pt](-5.5,-0.6)(0.7,0.25)
\endpsclip}
\endpsclip}
\rput(-5.4,0.5){\Rnode{Mg}{\textcolor{blue}{(M,g)}}}
\psbezier[linecolor=green](-4.0,3.15)(-1.8,3.15)(-4.9,2.95)(-2.1,2.95)
\rput(-1.8,2.95){\Rnode{S1}{\textcolor{green}{S_1}}}
\psbezier[linecolor=green](-6.3,-3.15)(-3.8,-3.15)(-8.9,-3.7)(-1.9,-3.7)
\psbezier[linecolor=green](-2.6,-3.15)(-0.8,-3.15)(-4.9,-3.7)(-1.9,-3.7)
\rput(-1.6,-3.7){\Rnode{S0}{\textcolor{green}{S_0}}}
\psline[linecolor=red,linewidth=0.5pt]{*->}(-5.98,2.76)(-5.98,3.76)
\psline[linecolor=red,linewidth=0.5pt]{*->}(-4.08,2.848)(-4.08,3.848)
\psline[linecolor=red,linewidth=0.5pt]{*->}(-4.98,2.704)(-4.98,3.704)
\psline[linecolor=red,linewidth=0.5pt]{*->}(-6.38,3.126)(-6.38,4.026)
\psline[linecolor=red,linewidth=0.5pt]{*->}(-5.48,3.293)(-5.48,4.193)
\psline[linecolor=red,linewidth=0.5pt]{*->}(-4.48,3.250)(-4.48,4.150)
\psline[linecolor=red,linewidth=0.5pt]{*->}(-8.28,-3.24)(-8.08,-2.24)
\psline[linecolor=red,linewidth=0.5pt]{*->}(-6.38,-3.152)(-6.18,-2.152)
\psline[linecolor=red,linewidth=0.5pt]{*->}(-7.28,-3.296)(-7.08,-2.296)
\psline[linecolor=red]{*->}(-8.68,-2.874)(-8.48,-1.874)
\psline[linecolor=red]{*->}(-7.78,-2.707)(-7.58,-1.707)
\psline[linecolor=red]{*->}(-6.78,-2.750)(-6.58,-1.750)
\psline[linecolor=red,linewidth=0.5pt]{*->}(-4.58,-3.24)(-4.68,-2.24)
\psline[linecolor=red,linewidth=0.5pt]{*->}(-2.68,-3.152)(-2.78,-2.152)
\psline[linecolor=red,linewidth=0.5pt]{*->}(-3.58,-3.296)(-3.68,-2.296)
\psline[linecolor=red]{*->}(-4.98,-2.874)(-5.08,-1.874)
\psline[linecolor=red]{*->}(-4.08,-2.707)(-4.18,-1.707)
\psline[linecolor=red]{*->}(-3.08,-2.750)(-3.18,-1.750)
\psline[linecolor=red]{*->}(-8.5,-4.875)(-8.5,-4.525)
\psline[linecolor=red]{*->}(-8.25,-4.875)(-8.25,-4.525)
\psline[linecolor=red]{*->}(-8.0,-4.875)(-8.0,-4.525)
\rput[l](-7.7,-4.76){\Rnode{red}{\textcolor{red}{\text{\normalsize: timelike vector field on $(M,g)$}}}}
\rput(5,4){\textcolor{blue}{\text{\normalsize lightcones at the boundary:}}}
\rput(5,0){\textcolor{blue}{\text{\normalsize Lorentz cobordism}}}
\rput(5,-4.5){\textcolor{blue}{\text{\normalsize weak Lorentz cobordism}}}
\pspolygon*[linecolor=grey](2.705,2.307)(2.360,2.273)(2.840,0.957)(2.495,0.923)
\pspolygon*[linecolor=grey](4.595,2.091)(3.611,1.891)(4.789,1.135)(3.805,0.935)
\pspolygon*[linecolor=grey](6.007,2.182)(5.126,1.700)(6.474,1.326)(5.593,0.844)
\pspolygon*[linecolor=grey](7.607,2.284)(7.064,2.229)(7.736,1.001)(7.193,0.946)
\pspolygon*[linecolor=grey](2.670,-2.186)(2.393,-2.216)(2.807,-3.554)(2.530,-3.582)
\pspolygon*[linecolor=grey](4.270,-2.291)(3.526,-2.800)(4.875,-3.174)(4.130,-3.684)
\pspolygon*[linecolor=grey](6.007,-2.318)(5.138,-3.213)(6.462,-2.761)(5.593,-3.656)
\pspolygon*[linecolor=grey](7.539,-2.199)(6.898,-2.397)(7.902,-3.373)(7.261,-3.571)
\psarc[linecolor=green,linewidth=0.5pt](5,22.0){25}{263}{277}
\rput(1.6,-2.8){\textcolor{green}{S_i}}
\psarc[linecolor=green,linewidth=0.5pt](5,26.5){25}{263}{277}
\rput(1.6,1.7){\textcolor{green}{S_i}}
\psline(1,-0.8)(9,-0.8)
\psline[linecolor=red,linewidth=0.5pt]{*->}(2.6,1.615)(2.6,2.615)
\psline[linecolor=red,linewidth=0.5pt]{*->}(4.2,1.513)(4.2,2.513)
\psline[linecolor=red,linewidth=0.5pt]{*->}(5.8,1.513)(5.8,2.513)
\psline[linecolor=red,linewidth=0.5pt]{*->}(7.4,1.615)(7.4,2.615)
\psline[linecolor=red,linewidth=0.5pt]{*->}(2.6,-2.885)(2.6,-1.885)
\psline[linecolor=red,linewidth=0.5pt]{*->}(4.2,-2.987)(4.2,-1.987)
\psline[linecolor=red,linewidth=0.5pt]{*->}(5.8,-2.987)(5.8,-1.987)
\psline[linecolor=red,linewidth=0.5pt]{*->}(7.4,-2.885)(7.4,-1.885)
\]
\vspace*{20ex}
\caption{A [weak] Lorentz cobordism. The picture on the left is not optimal because the vector field cannot be extended nonvanishingly to the whole cobordism. The viewer is supposed to imagine that it can. (It is impossible to draw a good picture, because nontrivial $2$-dimensional [weak] Lorentz cobordisms do not exist.)} \label{cobordism}
\end{figure}

One can generalize this definition to noncompact manifolds, but for reasons of simplicity and mathematical elegance we will discuss only the compact case here. Moreover, we will concentrate on the dimension which is relevant in general relativity, $n=4$.

\smallskip
[Weak] Lorentz cobordance is an equivalence relation. Clearly two manifolds are Lorentz cobordant if and only if they are weakly Lorentz cobordant. But when we require the cobordism metric $g$ to satisfy the dominant energy condition --- recall that every physically realistic space-time metric should have this property ---, then the two cobordance relations become very different, as we will see.

\smallskip
In 1963, Reinhart \cite{Reinhart1963} proved (using the well-known computations of the unoriented and oriented cobordism rings due to R.~Thom and C.T.C.~Wall) that every two closed $3$-manifolds $S_0,S_1$ are Lorentz cobordant; and that for every two closed \emph{oriented} $3$-manifolds, there exists a Lorentz cobordism $(M,g)$ between $S_0$ and $S_1$ and an orientation of $M$ which turns $M$ into an oriented cobordism in the usual sense (i.e., the orientation of $M$ induces the given orientation on $S_1$ and the opposite of the given orientation on $S_0$).

\smallskip
In 1967, Geroch \cite{Geroch1967} observed that topology change, i.e.\ the existence of a Lorentz cobordism $(M,g)$ between nondiffeomorphic closed $3$-manifolds, can only occur when $(M,g)$ admits a closed timelike curve. Finally, Tipler \cite{Tipler1977} proved in 1977 several nonexistence theorems for nontrivial Lorentz cobordisms which satisfy some energy condition. One of his results is the following (cf.\ also the remarks on p.~29 of \cite{Nardmann2007}):

\begin{Thm}[Tipler]
Let $S_0,S_1$ be closed connected $3$-manifolds, let $(M,g)$ be a connected Lorentz cobordism between $S_0$ and $S_1$ which satisfies $\Ric_g(v,v)>0$ for all lightlike vectors $v\in TM$ (i.e.\ nonzero vectors $v$ with $g(v,v)=0$). Then $S_0$ and $S_1$ are diffeomorphic, and $M$ is diffeomorphic to $S_0\times[0,1]$.
\end{Thm}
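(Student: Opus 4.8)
The plan is to combine a purely topological argument of Geroch with a focusing argument of the kind used in the singularity theorems; this is, in essence, Tipler's proof \cite{Tipler1977}.

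\emph{Step 1: reduction to a causality statement.} Following Geroch \cite{Geroch1967}, I would first show that a compact Lorentz cobordism without closed timelike curves is necessarily a product. Let $X$ be a $g$-timelike vector field on $M$ which is inward-directed on $S_0$ and outward-directed on $S_1$ (such an $X$ exists because $(M,g)$ is in particular a weak Lorentz cobordism). A forward integral curve of $X$ cannot leave $M$ through $S_0$, and a backward one cannot leave through $S_1$. Hence, if some maximal integral curve did not run from $S_0$ to $S_1$, it would be future- or past-imprisoned in the compact manifold $M$, so its $\omega$- (respectively $\alpha$-) limit set would be a nonempty compact invariant set on which $X$ is nowhere zero; picking a point $q$ there and joining two sufficiently late points of the integral curve which lie in a small neighbourhood of $q$ by a short timelike arc produces a closed timelike curve. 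So, assuming there are no closed timelike curves, every maximal integral curve of $X$ runs from a point of $S_0$ to a point of $S_1$; the exit time $\tau\colon S_0\to\R_{>0}$ is smooth and positive (transversality of $X$ to $S_1$), and $(p,s)\mapsto\varphi_{s\tau(p)}(p)$, with $\varphi$ the flow of $X$, is a diffeomorphism $S_0\times[0,1]\to M$. In particular $S_0$ and $S_1$ are diffeomorphic, and $M\cong S_0\times[0,1]$. It therefore remains to prove that the hypothesis $\Ric_g(v,v)>0$ for all lightlike $v$ forbids closed timelike curves.

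\emph{Step 2: ruling out closed timelike curves.} Suppose a closed timelike curve exists. Then the chronology-violating set $V=\set{p\in M\suchthat p\in I^+(p)}$ is nonempty and open. Because $\partial M$ is spacelike and $X$ is inward on $S_0$ and outward on $S_1$, no timelike loop can meet $\partial M$, so $V$ lies in the interior of $M$; since $M$ is connected and $V\neq M$, the boundary $\partial V$ is nonempty and compact. The set $\partial V$ has the structure of a (partial) Cauchy horizon: it is ruled by null geodesics, and such a generator, being trapped in the compact set $\partial V$, is past- or future-imprisoned, hence of infinite extent in that direction. Along such a generator $\mu$ the strict inequality $\Ric_g(\mu',\mu')>0$ holds; since the bundle of null directions over the compact set $\partial V$ is compact, $\Ric_g$ is bounded below by a positive constant on it (after normalising with an auxiliary Riemannian metric), so the null Raychaudhuri equation forces a pair of conjugate points on $\mu$. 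But beyond a conjugate point a null generator of $\partial V$ enters the chronological future of its earlier points and can no longer lie on the achronal-type set $\partial V$ --- a contradiction. Hence $V=\emptyset$, and by Step 1 the theorem follows.

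\emph{Main obstacle.} The delicate part is the focusing argument of Step 2, in two respects. First, one must rule out total imprisonment effects (the Misner phenomenon): the null generator of $\partial V$ must be shown to have infinite \emph{affine} length in the relevant direction, so that the lower Ricci bound genuinely produces conjugate points within finite affine parameter. Second, one must make precise the Cauchy-horizon structure of $\partial V$ --- its achronality properties and the sense in which it is ruled by inextendible null geodesics --- in the presence of the cobordism boundary. Both are handled by the structure theory of compact Cauchy horizons under energy conditions due to Tipler \cite{Tipler1977} (see also \cite[Sections~6.4 and~6.5]{hawking+ellis} and the remarks in \cite[p.~29]{Nardmann2007}); it is exactly here that the \emph{strictness} of the null energy condition (or, in other treatments, an additional genericity condition) is essential, since under the non-strict condition $\Ric_g(v,v)\geq0$ imprisoned null geodesics without conjugate points can occur and the conclusion fails.
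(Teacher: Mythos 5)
The paper does not prove this theorem itself; it is stated as a result of Tipler and referenced to \cite{Tipler1977} (see also the remarks in \cite{Nardmann2007}), so there is no internal proof to compare your proposal against. Your two-step outline --- Geroch's reduction of the product structure to the absence of closed timelike curves, followed by a focusing argument along null generators of the boundary of the chronology-violating set --- is indeed the correct shape of Tipler's argument, and you rightly flag where the real difficulties lie.

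One concrete slip in Step~1: ``joining two sufficiently late points $\gamma(t_1),\gamma(t_2)$ of the integral curve near $q$ by a short timelike arc'' does not by itself yield a closed timelike curve. The integral curve already supplies a future-directed timelike path from $\gamma(t_1)$ to $\gamma(t_2)$; to close the loop one needs a future-directed timelike path from $\gamma(t_2)$ back to $\gamma(t_1)$, i.e.\ $\gamma(t_1)\in I^+(\gamma(t_2))$, and inside a small convex normal neighbourhood of $q$ this is in fact \emph{false}, because $\gamma$ runs from past to future there. The standard fix is the open-cover argument: the $\omega$-limit set $\Omega$ of $\gamma$ is nonempty, compact and flow-invariant, hence covered by $\set{I^+(p)\suchthat p\in\Omega}$; a finite subcover $I^+(p_1),\dots,I^+(p_k)$ together with the observation that each $p_i$ lies in some $I^+(p_j)$ produces a cycle in the chronological relation, hence a closed timelike curve. (This is the same mechanism that shows every compact boundaryless Lorentzian manifold contains a closed timelike curve.) The rest of Step~1 --- smoothness of the exit time $\tau$ by transversality and the resulting product diffeomorphism --- is fine.

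On Step~2, the points you list under ``Main obstacle'' are genuine and not cosmetic: a lower bound on $\Ric_g$ over the compact bundle of null directions on $\partial V$ gives a focusing rate only per unit \emph{affine} parameter, so one must first show that the imprisoned generator has infinite affine length in the relevant direction (total imprisonment with finite affine length must be excluded), and the horizon-type structure and achronality properties of $\partial V$ have to be established carefully in the presence of the cobordism boundary. Your proposal defers these to Tipler and Hawking--Ellis, which is appropriate for a sketch, but be aware that this is where the substantive technical content of the theorem lives; without it the Raychaudhuri step does not close.
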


Note that every Lorentzian manifold $(M,g)$ which satisfies the dominant energy condition for some $\Lambda\in\R$ has the property that $\Ric_g(v,v)\geq0$ holds for all lightlike vectors $v\in TM$. Thus Tipler's theorem \emph{almost} rules out nontrivial dominant energy Lorentz cobordisms.

\smallskip
When we apply Theorem \ref{maindom}, we obtain a completely different result for \emph{weak} Lorentz cobordisms \cite[Corollary 9.4]{Nardmann2007}:

\begin{Thm}
Let $S_0,S_1$ be closed orientable $3$-manifolds, let $\Lambda\in\R$. Then there exists a weak Lorentz cobordism $(M,g)$ between $S_0$ and $S_1$ which satisfies the dominant energy condition with respect to $\Lambda$ and satisfies, moreover, $\Ric_g(v,v)>0$ for all lightlike vectors $v\in TM$.
\end{Thm}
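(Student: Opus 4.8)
The plan is to reduce the statement to Theorem~\ref{maindom} by enlarging the desired cobordism to a closed $4$-manifold, and then to read the \emph{strict} null condition off the proof of that theorem. First I would fix the topology. Since $\Omega_3^{\mathrm{SO}}=0$, the closed oriented $3$-manifold $S_0\sqcup S_1$ bounds a compact oriented smooth $4$-manifold $W$; discarding the closed components of $W$ and joining the remaining (at most two) pieces by an interior connected sum produces a connected compact oriented $M$ with $\mfbd M=S_0\sqcup S_1$. Interior connected sums with $\C P^2$ (which raise $\chi$ by $1$) and with $S^1\times S^3$ (which lower $\chi$ by $2$) then let me arrange $\chi(M)=0$ in addition. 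Since $\chi(S_0)=0$, the obstruction $\chi(M)=\chi(M,S_0)$ to a nowhere-zero vector field on $M$ that is inward-directed on $S_0$ and outward-directed on $S_1$ vanishes, so such a field $Y$ exists; near $\mfbd M$ I take $Y$ to be a collar vector field. I then form the double $DM=M\cup_{\mfbd M}\bar M$: it is closed, connected, oriented, and admits an orientation-reversing self-diffeomorphism, so $\sigma(DM)=0$. Since $\chi(\bar M)=0$, there is a nowhere-zero field $Y'$ on $\bar M$ which is outward on the $S_0$-part and inward on the $S_1$-part of $\mfbd\bar M$ and agrees with $Y$ along $\mfbd M$; $Y$ and $Y'$ glue to a nowhere-zero field $Z$ on $DM$. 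I pick a Lorentzian metric $\hat g_0$ on $DM$ with $Z$ timelike and $Z^\perp$ spacelike; then $(DM,\hat g_0)$ is time-orientable (its timelike line bundle is $\R Z$) and, being orientable, also space-orientable, and $Y=Z|_M$ is $\hat g_0$-timelike.

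Next I would apply Theorem~\ref{maindom} to the connected Lorentzian $4$-manifold $(DM,\hat g_0)$ --- which is compact, time- and space-orientable, and has $\sigma(DM)\equiv0\pmod4$ --- with compact set $K=M\subset DM$ and the given $\Lambda$. Following the proof of that theorem, I would choose a spacelike twisted $(n-1)$-plane distribution $H$ on $DM$ (in dimension $4$ the potential obstruction is killed here because $\sigma(DM)\equiv0\pmod4$, via the cited Hirzebruch--Hopf and Donaldson results), let $V$ be the $\hat g_0$-orthogonal complement of $H$, and define $\hat g'$ by the stretching formula~\eqref{gprime} with a small positive \emph{constant} $f$. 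For $f$ small enough, Theorem~\ref{maindom} gives that every $\hat g_0$-causal vector is $\hat g'$-timelike and that $\hat g'$ satisfies on $M$ the dominant energy condition with respect to $\Lambda$.

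The one conclusion not contained verbatim in Theorem~\ref{maindom} is the strict inequality on null vectors, so there I would revisit the Ricci computations in its proof. Inspecting~\eqref{Ricformula} and its analogues for arguments in $V_x\times V_x$ and in $V_x\times H_x$, with $f$ a constant so that all $df$-terms drop out, one sees that at a point $x\in M$ every $\hat g'$-null vector has the form $v=h+\alpha e_0$ with $h\in H_x$, $e_0$ a unit generator of $V_x$, $\alpha\neq0$, and $\abs{h}^2_{\hat g_0}=\alpha^2/f^2$; moreover the coefficient of $f^{-4}$ in $\Ric_{\hat g'}(v,v)$ equals, up to a positive constant, $\alpha^2\sum_{a,b}\hat g_0\big(\Tw_H(a,b),e_0\big)^2$ (with the sum over an $\hat g_0$-orthonormal frame of $H_x$), while the remaining terms are $O(\alpha^2/f^3)$. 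Since $H$ is twisted, this $f^{-4}$-coefficient is bounded below by a positive constant on the compact set $M$, so after shrinking $f$ further --- which keeps the dominant energy condition, valid for all small $f$ --- I obtain $\Ric_{\hat g'}(v,v)>0$ for every $\hat g'$-lightlike $v\in T_xDM$ with $x\in M$.

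Finally I would put $g:=\hat g'|_M$. Then $Y=Z|_M$ is $\hat g_0$-timelike, hence $\hat g'$-timelike, hence $g$-timelike, and it is inward on $S_0$ and outward on $S_1$, so $(M,g)$ is a weak Lorentz cobordism between $S_0$ and $S_1$. Because curvature is local and $\hat g'$ extends smoothly across $\mfbd M$, the metric $g$ inherits on all of $M$ --- including its boundary --- both the dominant energy condition with respect to $\Lambda$ and the inequality $\Ric_g(v,v)>0$ for every $g$-lightlike $v\in TM$, which is exactly the assertion of the theorem. (In the degenerate case $S_0\sqcup S_1=\emptyset$ one replaces $DM$ by $T^4$.) The hard part, I expect, will be the strict null condition of the third paragraph: it forces one to make the $f\searrow0$ asymptotics of the Ricci tensor in~\eqref{Ricformula} quantitative rather than to cite Theorem~\ref{maindom} as a black box, whereas the doubling construction, needed only because Theorem~\ref{maindom} is stated for manifolds without boundary, is routine by comparison.
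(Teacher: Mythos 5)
Your proof is correct, but it takes a genuinely different route from the paper's. The paper simply invokes Reinhart's theorem to produce an orientable Lorentz cobordism $(M,g)$ between $S_0$ and $S_1$, notes that every Lorentz cobordism is time-orientable, applies Theorem~\ref{maindom} with $K=M$ to this compact manifold-\emph{with-boundary}, and then, exactly as you do, observes that a closer look at the Ricci estimates in the proof of Theorem~\ref{maindom} upgrades the inequality $\Ric_{g'}(v,v)\geq0$ to a strict inequality on null vectors. Your approach differs in two ways. First, instead of citing Reinhart you reconstruct the underlying topological cobordism from scratch via $\Omega_3^{\mathrm{SO}}=0$, Euler-characteristic surgery, and obstruction theory for vector fields transverse to the boundary; this is essentially a re-derivation of part of Reinhart's theorem, so it buys you nothing beyond self-containedness. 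Second, and more substantively, you embed the cobordism into a closed $4$-manifold by doubling, arrange signature zero, and apply Theorem~\ref{maindom} to $(DM,\hat g_0)$ with $K=M$, then restrict back. This sidesteps a small imprecision the paper tolerates: Theorem~\ref{maindom} as stated dichotomizes between noncompact manifolds and closed manifolds with signature divisible by $4$, and a compact manifold-with-nonempty-boundary is neither; the paper implicitly treats the with-boundary case as falling on the unobstructed side, whereas your doubling makes the hypotheses hold literally. In the curvature step, your identification of the dominant contribution as coming solely from the $V\times V$ block with coefficient proportional to $\alpha^2\sum_{a,b}\hat g_0(\Tw_H(a,b),e_0)^2/f^4$ is slightly imprecise --- after normalizing a null vector the $H\times H$ block contributes a nonnegative term of the same order $f^{-2}$, so both blocks enter at leading order --- but both contributions are nonnegative and the $V\times V$ contribution is bounded below by a positive constant on the compact set by twistedness, so your conclusion stands.
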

\begin{proof}[Sketch of proof]
By Reinhart's theorem, there exists an orientable Lorentz cobordism $(M,g)$ between $S_0$ and $S_1$. Since every (weak) Lorentz cobordism is time-orientable by definition, we can apply Theorem \ref{maindom} (with $K=M$) and get a Lorentz metric $g'$ on $M$ which satisfies the dominant energy condition with respect to $\Lambda$ and makes every $g$-timelike vector timelike. The latter property implies that $(M,g')$ is a weak Lorentz cobordism between $S_0$ and $S_1$. The former property yields already $\Ric_{g'}(v,v)\geq0$ for all lightlike vectors $v$. A closer look at the curvature estimates in the proof of Theorem \ref{maindom} reveals that one can even arrange that $\Ric_{g'}(v,v)>0$ holds for all lightlike $v$.
\end{proof}

If $S_0,S_1$ in the previous theorem are not diffeomorphic, then Tipler's theorem shows that the weak Lorentz cobordism produced by our theorem \emph{must} make some tangent vectors to the boundary lightlike or timelike; see also Figure \ref{cobordism}. (Nevertheless, there exists a timelike vector field which is transverse to the boundary.) The flexibility required for topology change via a dominant energy metric can only be obtained from the absence of compact spacelike hypersurfaces.

\subsection{Outlook}
We conclude the discussion with some remarks and open problems.

\smallskip
Two obvious questions are whether the topological conditions on the $4$-manifold in Theorem \ref{maindom} can be removed, and whether one can always arrange that the dominant energy condition holds not only on a compact subset but globally on a noncompact manifold. We expect that the correct answer to both questions is \emph{yes}. The method of proof will be essentially the same, but the start metric $g$ and the function $f$ in the proof have to be chosen more carefully; for instance, one cannot expect that a \emph{constant} $f$ will suffice. Unfortunately a nonconstant $f$ makes the necessary estimates much more difficult, as the complicated formulas \ref{Ricformula} indicate.

\smallskip
Several points related to the nonexistence of space foliations remain to be clarified. First, it would be nice to have a quantitative criterion saying that if the spacelike region of a Lorentzian metric $g$ is on an open set $U$ in a suitable sense sufficiently close --- here we want an explicit estimate --- to a nonintegrable distribution, then $U$ does not admit a $g$-space foliation. Whereas in the previous sections, our constructions of metrics without space foliations always used a sequence $(g_k)_{k\in\N}$ of metrics and stated that there exists a $k_0$ such that for each $k\geq k_0$, the metric $g_k$ does not admit a space foliation; but we did not know explicitly how large $k_0$ had to be chosen.

\smallskip
Second, we would like to prove more than absence of space foliations, namely even absence of single spacelike hypersurfaces with certain properties. For example, consider in Minkowski space-time $\R^{3,1}$ an infinitely long cylinder $Z$ with timelike axis and, say, Euclidean radius $1$. When we perform our standard space foliation-removing procedure inside of $Z$ but keep the Minkowski metric fixed outside of $Z$, can we construct a Lorentzian metric which is equal to the Minkowski metric outside of $Z$, but does not admit any asymptotically flat spacelike hypersurface diffeomorphic to $\R^3$?

\smallskip
Lorentzian metrics without space foliations violate the usual causality assumptions of general relativity, but these causality assumptions seem to be true for the physical space-time metric of the universe we live in, within the range of currently available experimental data. Thus metrics without space foliations can only be physically realistic when the causality violations occur on length scales which are too small (or too large) to be observed. For instance, if in Theorem 3.1 $A$ is the complement of a tiny ball in Minkowski space-time, then the metric $g'$ violates e.g.\ global hyperbolicity, but since $g'$ is equal to the Minkowski metric outside of the tiny ball, this violation could hardly be detected.

\smallskip
It is widely believed in physics that at the Planck length scale ($\approx 10^{-35}$ m), our universe is no longer modelled adequately by a Lorentzian manifold. The physical relevance of the geometric considerations above depends on the speculation that there exists a length scale between $10^{-35}$ m and the experimentally accessible scale of $10^{-19}$ m, so large that space-time is already modelled to high precision by a smooth Lorentzian metric which satisfies the dominant energy condition with respect to the cosmological constant $\Lambda$, but still small enough for causality conditions to be violated. This physical speculation is not as far-fetched as it might appear at a first glance, but we cannot discuss it here. Astronomical observations suggest that $\Lambda$ is positive in the universe we live in. Note that $\Lambda>0$ is the hardest case for the construction of dominant energy metrics.

\smallskip
In order to find space-foliationless dominant energy metrics $g'$ that violate causality only on sets which are small in some sense, one would like to combine the ``relative'' Theorem \ref{foliationless} (where the metric is not changed on a given set $A$) with the existence theorem \ref{maindom} for dominant energy metrics: If $A$ is a suitable closed subset of a Lorentzian manifold $(M,g)$ such that $g$ satisfies the dominant energy condition on $A$, one wants to find a metric $g'$ on $M$ which is equal to $g$ on $A$ and satisfies the dominant energy condition everywhere.

\smallskip
A closely related question is whether topology change can occur when the space-time metric violates causality only on a very small set (too small to be detected by physicists). One would like to construct a weak Lorentz cobordism $(M,g)$ between nondiffeomorphic $3$-manifolds which satisfies the dominant energy condition and is ``almost a Lorentz cobordism'' in the sense that only a tiny set of $\mfbd M$ is not $g$-spacelike.

\smallskip
Finally, an important question is whether there exists a Lorentzian \emph{$\Lambda$-vacuum manifold} $(M,g)$ --- i.e., a Lorentzian manifold with $\Ric -\tfrac{1}{2}sg +\Lambda g=0$; in other words, a Lorentzian Einstein manifold --- which does not admit a space foliation.


\Thanks{{\em{Acknowledgments:}} We would like to thank Margarita Kraus for helpful
comments on the manuscript.}



\def\dbar{\leavevmode\hbox to 0pt{\hskip.2ex \accent"16\hss}d}
\providecommand{\bysame}{\leavevmode\hbox to3em{\hrulefill}\thinspace}
\providecommand{\MR}{\relax\ifhmode\unskip\space\fi MR }
\providecommand{\MRhref}[2]{%
  \href{http://www.ams.org/mathscinet-getitem?mr=#1}{#2}
}
\providecommand{\href}[2]{#2}

\end{document}